\documentclass[11pt,a4paper]{article}
\usepackage[margin=27mm]{geometry}
\usepackage{amsmath,amssymb,amsthm}
\usepackage{microtype}
\usepackage[hidelinks]{hyperref}
\allowdisplaybreaks[2]
\newtheorem{theorem}{Theorem}[section]
\newtheorem{lemma}[theorem]{Lemma}
\newtheorem{proposition}[theorem]{Proposition}
\newtheorem{corollary}[theorem]{Corollary}
\theoremstyle{remark}
\newtheorem{remark}[theorem]{Remark}
\newtheorem*{remark*}{Remark}
\newtheorem{example}[theorem]{Example}
\theoremstyle{plain}
\newcommand{\F}{\mathbb F}
\hypersetup{
  pdftitle={Covering finite fields by translates of multiplicative subgroups},
  pdfauthor={Juncheng Zhou and Hongfeng Wu}}
\title{Covering finite fields by translates of multiplicative subgroups}
\author{Juncheng Zhou and Hongfeng Wu\thanks{Corresponding author: Hongfeng Wu.
E-mail addresses: jczhoumath@gmail.com (J. Zhou), whfmath@gmail.com (H. Wu).}\\[3pt]
\small College of Science, North China University of Technology, Beijing, China}
\date{}
\begin{document}
\maketitle

\begin{abstract}
Let $G_d$ be the subgroup of index $d>1$ in $\F_q^\times$, and put
$G_d^0=G_d\cup\{0\}$. For a covering $A+G_d^0=\F_q$ with $\#A=s$,
we prove $q\le d^{8s/d}$ by shifting the roots of a Bonferroni polynomial.
Although a uniform bound in $s$ alone must allow exponential growth,
our estimate confines such growth to bounded indices and gives
polynomial bounds when $s/d$ is bounded. A cubic specialization gives
bounds of degree at most eight for $s\le2d$.
For shift sets of logarithmic size, higher moment estimates give a
uniform covering-probability formula, including the critical window.
Together with the index restriction, this determines the covering
indices of almost every shift set of size $\lceil k\log q\rceil$
for each fixed integer $k\ge2$.

A subfield construction attains the order of the uniform restriction
on the index. We determine its covering number exactly and prove
minimality under subgroup inclusion. We also characterize
equality in the elementary counting bound by subfield cosets.
Fixing a subfield as the shift set in a quadratic extension leads
to a sharper projection bound, attained by an infinite family with
unique representations outside the shift subfield.
\end{abstract}
\noindent\textbf{Keywords.} Finite field; additive covering; multiplicative subgroup; character sum; subfield.
\par\smallskip
\noindent\textbf{2020 Mathematics Subject Classification.} Primary 11T24; Secondary 11T30.

\section{Introduction}
Let $\F_q$ be the finite field of order $q$. For a positive divisor
$d$ of $q-1$, write
\[
 G_d=(\F_q^\times)^d,\qquad G_d^0=G_d\cup\{0\}.
\]
We consider coverings
\begin{equation}\label{eq:covering}
 A+G_d^0=\F_q,\qquad A\subseteq\F_q,\quad \#A=s\ge1,\quad d>1.
\end{equation}
We ask how the field size depends on the number of shifts and the index.
Since $\#G_d^0=(q-1)/d+1$, counting gives $d\le2s-1$, but no bound on
$q$ for fixed $s$. Character sums provide such a bound. Our aim is to
make its dependence on $d$ explicit and to understand how the shift
set affects the possible indices.
All logarithms are natural. For integers $n\ge0$, we set
$\binom nk=0$ when $k<0$ or $k>n$.

Zhou, Danchev and Wu \cite[Theorem~2.1]{five} proved
that \eqref{eq:covering} implies
\begin{equation}\label{eq:uniform}
 q\le(s2^{s-1})^2.
\end{equation}
They also showed, by inclusion--exclusion through order three, that
\begin{equation}\label{eq:previous-large-index}
 \frac{s}{d}\le\frac32\quad\Longrightarrow\quad q\le256s^6
\end{equation}
under the same hypothesis \cite[Theorem~3.1]{five}.
The exponential dependence on $s$ in \eqref{eq:uniform} is unavoidable.
For every odd $q$, averaging gives a covering by translates of $G_2^0$
with at most $\lceil\log q/\log2\rceil$ shifts; see the proof of
Proposition~\ref{prop:covering-number}. Enlarging the shift set if necessary,
we may take $s=\lceil\log q/\log2\rceil$, so $q>2^{s-1}$.
Hence no uniform subexponential bound in $s$ is possible, although
the optimal exponential rate in \eqref{eq:uniform} remains undetermined.

We therefore ask how the index restricts the field size as $d$ grows.
Shifting the roots of the standard Bonferroni polynomial and choosing
the degree in terms of $s/d$ gives our main bound.

\begin{theorem}\label{thm:index-bound}
Suppose that \eqref{eq:covering} holds. Then
\begin{equation}\label{eq:index-bound}
 q\le d^{8s/d}.
\end{equation}
\end{theorem}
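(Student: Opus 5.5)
The plan is to compare a weighted count of points with its character-sum expansion. For $x\in\F_q$ let
\[
N(x)=\#\{a\in A:\ x-a\in G_d^0\}.
\]
The covering \eqref{eq:covering} says exactly that $N(x)\ge1$ for every $x$. I would choose a real polynomial $Q$ of degree $m$ such that
\[
Q(0)>0,\qquad Q(n)\le0\quad\text{for all integers }1\le n\le s.
\]
Then covering forces
\[
\Sigma:=\sum_{x\in\F_q}Q(N(x))\le0,
\]
and the work is to show $\Sigma>0$ once $q>d^{8s/d}$.

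The standard Bonferroni choice is $Q(n)=-\binom{n-1}{m}$ with $m$ odd. It has roots $1,\dots,m$, satisfies $Q(0)=1$, and is negative for $n>m$. \emph{Shifting the roots} means I would instead place the $m$ roots in a window adapted to the typical size $s/d$ of $N(x)$. Concretely, take $Q(n)=-\binom{n-1-r}{m}$ for a shift $r$ and degree $m$ to be tuned, with the parity and sign chosen so that $Q(n)\le0$ on $1\le n\le s$ while $Q(0)$ stays positive and not too small.

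Next I would expand $Q$ in the binomial basis, $Q(n)=\sum_{j\le m}c_j\binom nj$. This gives
\[
\Sigma=\sum_j c_j\sum_{B\subseteq A,\ \#B=j}\#\{x:\ x-B\subseteq G_d^0\}.
\]
Write $1_{G_d}(y)=\frac1d\sum_{\chi^d=1}\chi(y)$ for $y\ne0$, and treat the points $x\in B$ separately, which costs $O(j)$ per set $B$. Weil's bound for mixed products of multiplicative characters then gives
\[
\#\{x:\ x-B\subseteq G_d^0\}=\frac{q}{d^j}+E_B,\qquad |E_B|\le j\sqrt q+O(j).
\]
Hence
\[
\Sigma=q\,\mathbb E\,Q(X)+O\Bigl(\sqrt q\,\sum_j|c_j|\binom sj j\Bigr),\qquad X\sim\mathrm{Bin}(s,1/d).
\]
The error factor is at most $\sqrt q\,m\,2^{O(m)}s^m$, or something of that shape.

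The main obstacle is the main term. I need
\[
\mathbb E\,Q(X)\ge d^{-O(s/d)}
\]
together with $\sum_j|c_j|\binom sj\le d^{O(s/d)}$, where all implied exponents are explicit and combine to give $\sqrt q\le d^{4s/d}$. The risk is that the positive contribution $Q(0)\Pr(X=0)=Q(0)(1-1/d)^s\approx Q(0)e^{-s/d}$ gets swamped by the negative values on $n\ge1$. Those values are weighted by $\Pr(X=n)\approx (s/d)^n/n!$. With the roots shifted to lie between $1$ and roughly $m\asymp s/d+1$, the values $|Q(n)|$ for $n\le m$ are zero, and the tail $n>m$ has binomial probability decaying like $(es/(dm))^m$. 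So I would take $m\approx C\,s/d$, or $m$ a bounded constant when $s/d$ is small; note that $d\le 2s-1$ always holds. With that choice the tail is dominated by $Q(0)e^{-s/d}$, while $|c_j|\binom sj\lesssim (s/\!\cdot)^j\le d^{O(m)}$ once one uses $s\le d\cdot(s/d)$.

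Finally I would balance $q\,d^{-O(s/d)}$ against $\sqrt q\,d^{O(s/d)}$ and track constants to reach the exponent $8s/d$. Small cases, where $s/d<1$ or $d$ is tiny, I would check separately against the uniform bound \eqref{eq:uniform}, or handle by taking $m=1$.
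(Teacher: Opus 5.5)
\textbf{Verdict: genuine gap.} Your framework is the paper's. You sum a polynomial in the representation count, expand it in the binomial basis, estimate $j$-fold intersections of translates by the Weil bound (Lemmas~\ref{lem:intersection} and~\ref{lem:bonferroni}), and read the main term as $\mathbb E\,Q(X)$ with $X\sim\mathrm{Bin}(s,1/d)$. Counting with $G_d^0$ instead of $G_d$ is harmless.

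The step that fails is your concrete shifted polynomial. For $r\ge1$, all roots $r+1,\dots,r+m$ of $Q(n)=-\binom{n-1-r}{m}$ are at least $2$. So $Q$ has constant sign on $[0,2)$, and $Q(0)>0$ forces $Q(1)>0$. Explicitly:
\begin{itemize}
\item for odd $m$, $Q(n)=\binom{r+m-n}{m}>0$ for $0\le n\le r$;
\item for even $m$, $Q(0)=-\binom{r+m}{m}<0$.
\end{itemize}
No choice of sign or parity rescues it. An admissible weight needs a root in $(0,1]$, and no later sign pattern may leave an integer where $Q>0$. The paper instead keeps the root at $1$ and shifts only the remaining roots by one unit:
\[
P_\ell(t)=\prod_{j\in\{1,3,4,\dots,\ell+1\}}(1-t/j),\qquad \ell\ \text{odd}.
\]
The roots after $1$ come in adjacent integer pairs, so nonpositivity on $\{1,\dots,s\}$ survives. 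The only cost is the small value $P_\ell(2)=-2/(\ell(\ell+1))$.

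Your fallback description, with roots between $1$ and $m$, is just the ordinary Bonferroni polynomial. There the plan stops short of the statement, whose point is the explicit exponent $8s/d$.

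\begin{itemize}
\item \emph{Main term.} Your target $\mathbb E\,Q(X)\ge d^{-O(s/d)}$ is already too lossy, since any $d^{-\epsilon s/d}$ loss in the main term pushes the exponent past $8$. The paper needs, and proves, $T_\ell\ge\tfrac34\alpha_d^s$.
\item \emph{Where $8=2\cdot4$ comes from.} The degree must sit below $4s/d$ while the tail begins at or beyond $4s/d$. Then Stirling's bound $\binom sj\le(\mathrm es/j)^j/\sqrt{2\pi j}$ gives $E_\ell\lesssim d^{4s/d}(\mathrm e/4)^{4s/d}$, and the tail is $\lesssim(\mathrm e/4)^{4s/d}$. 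The $1/j!$ is essential: your preliminary $2^{O(m)}s^m$ yields only $\log q\ll(s/d)\log s$.
\item \emph{The numerical input.} For $d\ge10$, $(\mathrm e/4)^4<(5/8)(9/10)^{10}\le(5/8)\alpha_d^d$. This makes the tail a small fraction of $\alpha_d^s$ and gives $E_\ell/\alpha_d^s\le d^{4s/d}$.
\item \emph{Parity.} With an unshifted polynomial of odd degree $m$, the tail starts at $m+1$. So $m<4s/d\le m+1$ is available for only about half of the values of $4s/d$. The one-step shift always gives $\ell<4s/d\le\ell+2$, taking $\ell$ the least odd integer $\ge4s/d-2$. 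It also lowers the top coefficient to $2/(\ell+1)$, so $j|c_j|\le2(\ell-j+1)$.
\end{itemize}

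As written, with $m\approx Cs/d$ and all exponents $O(s/d)$, your plan gives at most $q\le d^{Cs/d}$ for an unspecified $C$. To reach the theorem you would still need to:
\begin{itemize}
\item fix the degree and verify the numerical comparisons above;
\item dispose of $2\le d\le9$ via \eqref{eq:uniform}, using $\log4+1/\mathrm e<8\log d/d$;
\item dispose of $s<d$ by counting.
\end{itemize}
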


Taking logarithms gives
\[
 \log q\le8s\frac{\log d}{d}.
\]
Thus $\log q=o(s)$ along every sequence of coverings with
$d\to\infty$, with no restriction on $s/d$. Exponential growth
$q\ge\exp(cs)$ for a fixed $c>0$ is therefore possible only at bounded
index, where the bound need not improve \eqref{eq:uniform}.

When $s/d$ stays bounded, Theorem~\ref{thm:index-bound} gives a
polynomial bound. The ratio $s/d$ measures the average representation
multiplicity: averaged over $x\in\F_q$, the number of representations
$x=a+h$ with $a\in A$ and $h\in G_d$ is $(s/d)(1-1/q)$.
Since $d<2s$, for each fixed $C>0$,
\begin{equation}\label{eq:bounded-ratio}
 \frac{s}{d}\le C\quad\Longrightarrow\quad q=O_C(s^{8C}).
\end{equation}
This extends the range of polynomial bounds from $s/d\le3/2$ in
\eqref{eq:previous-large-index} to every bounded range of $s/d$.

For small $s/d$, a cubic specialization gives sharper estimates
(Section~\ref{sec:polynomial-tests}): $q\le4s^8$ for $s\le2d$, and
$q\le4\varepsilon^{-2}s^6$ for $s\le(2-\varepsilon)d$, where
$0<\varepsilon\le1$. The sixth-power estimate extends the range of
\eqref{eq:previous-large-index} to every fixed ratio $s/d<2$;
on the original range $s/d\le3/2$, it improves the constant from $256$ to $16$.
At $s=d$, a direct intersection argument gives the further improvement
$q<d^4$.

These estimates also describe how the index affects the number of shifts
needed for a covering. For fixed $d$, \eqref{eq:uniform} and averaging
already show that the minimum is of order $\log q$.
Writing $\tau(q,d)$ for this minimum, Proposition~\ref{prop:covering-number}
makes the dependence on $d$ explicit:
\[
 \frac{d\log q}{8\log d}\le\tau(q,d)\le\lceil d\log q\rceil.
\]
Inverting the lower estimate gives the uniform index bound in
Corollary~\ref{cor:index-growth}:
\[
 d\ll\frac{s}{\log q}\log\left(2+\frac{s}{\log q}\right).
\]
In particular, $s=O(\log q)$ forces $d=O(1)$.

Higher moments also give a uniform estimate for the proportion of
covering shift sets at logarithmic sizes. For each fixed $C>0$, if $A$
is chosen uniformly among the $s$-element subsets of $\F_q$, then
\[
 \Pr(A+G_d^0=\F_q)
 =\exp\bigl(-q(1-1/d)^s\bigr)+O_C(q^{-1/12}),
\]
uniformly for $1\le s\le C\log q$ and all $d>1$ dividing $q-1$
(Proposition~\ref{prop:uniform-covering}). This includes the critical window.
Combining the formula with the uniform index restriction shows that,
for each fixed integer $k\ge2$, almost every set of
$\lceil k\log q\rceil$ shifts has precisely the
covering indices $d\mid q-1$ with $2\le d\le k$
(Corollary~\ref{cor:counting-threshold}).

A subfield construction with $q\asymp s^2$ and $d\asymp s$
attains the order of the uniform restriction on the index
(Example~\ref{prop:quadratic-covering}). In this family, the subfield
is an optimal shift set, and replacing either multiplicative subgroup
by a proper subgroup destroys the covering while the other summand
is fixed.

We examine this subfield structure further in Section~\ref{sec:subfields}.
Equality in the elementary counting bound holds precisely when
$G_d^0$ is a subfield and $A$ is a complete set of its additive coset
representatives. With a subfield fixed as the shift set, projection
onto the additive quotient gives a sharper restriction: a covering
with shift set $\F_q\subset\F_{q^2}$ requires
\[
 d+\gcd(d,q+1)\le q+1.
\]
Cube roots of unity give an infinite family attaining equality in
this projection bound, with unique representations outside the
shift subfield (Example~\ref{prop:cubic-family}).

\section{An index-dependent upper bound}\label{sec:index-proof}
We first estimate intersections of translates and polynomial moments
of their representation counts. A shifted Bonferroni polynomial then
gives the lower bound needed for Theorem~\ref{thm:index-bound}.
The two lemmas below will also be used in the later sections.

For $d>1$ dividing $q-1$, write $\alpha_d=1-1/d$.
We begin with the intersection estimate; see also
\cite[Lemma~1.1]{martinyip}.
\begin{lemma}\label{lem:intersection}
Let $d>1$ divide $q-1$, and let $X\subseteq\F_q$ have cardinality
$k\ge1$. Put
\[
 I_d(X)=\bigcap_{x\in X}(x+G_d).
\]
Then
\begin{equation}\label{eq:intersection-explicit}
 \left|\#I_d(X)-\frac q{d^k}\right|
 \le\left(k\alpha_d-1+d^{-k}\right)\sqrt q+\frac kd,
\end{equation}
or, more crudely,
\[
 \left|\#I_d(X)-\frac q{d^k}\right|\le k\sqrt q.
\]
Both estimates hold for every $d,k,X$ as above, including when $k$
grows with $q$. For $k=2$, the coefficient of
$\sqrt q$ in \eqref{eq:intersection-explicit} is $\alpha_d^2$.
\end{lemma}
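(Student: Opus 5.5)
The plan is to write the indicator of $G_d$ through multiplicative characters and expand the product over $X$. For $y\ne0$,
\[
 \mathbf 1_{G_d}(y)=\frac1d\sum_{\chi^d=1}\chi(y),
\]
and the right-hand side vanishes at $y=0$ under the convention $\chi(0)=0$ for every $\chi$, the trivial one included. So this formula is exactly $\mathbf 1_{G_d}$ on all of $\F_q$. Then
\[
 \#I_d(X)=\sum_{y\in\F_q}\prod_{x\in X}\mathbf 1_{G_d}(y-x)
 =\frac1{d^k}\sum_{(\chi_x)_{x\in X}}\ \sum_{y\in\F_q}\prod_{x\in X}\chi_x(y-x),
\]
where each $\chi_x$ runs over the $d$ characters of order dividing $d$.

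First I treat the all-trivial tuple. Its inner sum is $q-k$, since exactly the $k$ points $y\in X$ are lost. This tuple contributes $(q-k)/d^k$.

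Next take a tuple with nontrivial characters indexed by a nonempty $S\subseteq X$, $\#S=j$, and trivial characters off $S$. The inner sum equals
\[
 \sum_{y\notin X\setminus S}\ \prod_{x\in S}\chi_x(y-x).
\]
Fix a generator $\psi$ of the character group of order $d$ and write $\chi_x=\psi^{e_x}$ with $1\le e_x\le d-1$. The full sum over $y\in\F_q$ is the character sum of $\psi$ applied to $\prod_{x\in S}(y-x)^{e_x}$. This polynomial has $j$ distinct roots of multiplicities $e_x<d$, so it is not a constant times a $d$-th power. Weil's bound then gives absolute value at most $(j-1)\sqrt q$. Removing the $k-j$ excluded points changes the sum by at most $k-j$.

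Now sum over all tuples. There are $\binom kj(d-1)^j$ tuples with support size $j$. The $\sqrt q$ coefficient is
\[
 \frac1{d^k}\sum_{j}\binom kj(d-1)^j(j-1).
\]
Using $\sum_j\binom kj(d-1)^j j=k(d-1)d^{k-1}$ and $\sum_{j\ge1}\binom kj(d-1)^j=d^k-1$, this equals $k\alpha_d-1+d^{-k}$.

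The remaining error collects the $-k/d^k$ from the main term together with $d^{-k}\sum_{j\ge1}\binom kj(d-1)^j(k-j)$. The binomial sum simplifies to $k(d^{k-1}-(d-1)^{k-1}-\dots)$. More carefully, $\sum_{j\ge0}\binom kj(d-1)^j(k-j)=k d^{k-1}$, and the $j=0$ term is $k$. So the total is $|{-k}+kd^{k-1}-k|/d^k\le k/d$, which gives \eqref{eq:intersection-explicit}.

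I expect the main obstacle to be this bookkeeping of the excluded points. The sign pattern must be handled so that the total comes out as $k/d$ and not something larger; the triangle inequality on $-k/d^k$ and $(kd^{k-1}-k)/d^k$ gives exactly $k/d$.

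For the crude bound, I need $(k\alpha_d-1+d^{-k})\sqrt q+k/d\le k\sqrt q$. This amounts to $k/d\le(k/d+1-d^{-k})\sqrt q$, which is clear since $\sqrt q\ge1$.

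For $k=2$, the coefficient is $2-2/d-1+1/d^2=(1-1/d)^2=\alpha_d^2$.
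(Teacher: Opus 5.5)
Your proof is correct and follows the paper's argument essentially line for line. The only difference is the convention at $0$: the paper takes $\chi^0=1$ and sums over $\F_q\setminus X$, while you let the trivial character vanish at $0$. This gives the same sum, and the Weil bound, the correction of at most $k-j$ for omitted points, and the count $\sum_{j\ge0}\binom kj(d-1)^j(k-j)=kd^{k-1}$ are exactly the paper's.

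One small slip: the omitted-point error should read $\bigl(k+(kd^{k-1}-k)\bigr)/d^k=k/d$ by the triangle inequality, not $|{-k}+kd^{k-1}-k|/d^k$, since the second term is only bounded in absolute value and has no known sign. Your next paragraph already states this correctly.
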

\begin{proof}
Write $X=\{x_1,\ldots,x_k\}$ and choose a character $\chi$ of order $d$.
We use $\chi^0=1$ and extend every nontrivial power of $\chi$ by zero
at the origin. Away from $X$, the indicator of $G_d$ is
$d^{-1}\sum_{j=0}^{d-1}\chi^j$, so
\[
 d^k\#I_d(X)
 =\sum_{j_1,\ldots,j_k=0}^{d-1}
   \sum_{x\in\F_q\setminus X}
   \prod_{i=1}^k\chi^{j_i}(x-x_i).
\]
For an exponent vector with $h\ge1$ nonzero coordinates,
the Weil bound gives
\begin{equation}\label{eq:weil}
 \left|\sum_{x\in\F_q}\prod_{i=1}^k\chi^{j_i}(x-x_i)\right|
 \le(h-1)\sqrt q;
\end{equation}
see \cite{schmidt}. Indeed, the corresponding polynomial has $h$
distinct roots, each of multiplicity strictly between $0$ and $d$,
so it is not a constant multiple of a $d$th power.
The summand vanishes at these $h$ roots, so removing the other
$k-h$ points of $X$ changes the sum by at most $k-h$.
The zero vector contributes $q-k$.
Across the $d^k$ vectors, each coordinate is nonzero $(d-1)d^{k-1}$
times and zero $d^{k-1}$ times. Thus the total error from omitting
$X$, including the zero vector, is at most $kd^{k-1}$, while the
sum of $h-1$ over the $d^k-1$ nonzero vectors is
$k(d-1)d^{k-1}-d^k+1$. Dividing by $d^k$ gives
\[
 \begin{aligned}
 \left|\#I_d(X)-\frac q{d^k}\right|
 &\le\frac{k(d-1)d^{k-1}-d^k+1}{d^k}\sqrt q+\frac kd\\
 &=\left(k\alpha_d-1+d^{-k}\right)\sqrt q+\frac kd.
 \end{aligned}
\]
Since $k/d\le(k/d)\sqrt q$ and $d^{-k}\le1$, this is at most
$k\sqrt q$.
\end{proof}

For $A\subseteq\F_q$ with $\#A=s\ge1$, put
\[
 r_A(x)=\#\{a\in A:x-a\in G_d\}.
\]

\begin{lemma}\label{lem:bonferroni}
Let $A\subseteq\F_q$ with $\#A=s\ge1$, and let $d>1$ divide $q-1$.
Let $\ell\ge1$ be an integer and
\[
 P(t)=\sum_{j=0}^\ell c_j\binom tj,\qquad c_j\in\mathbb R.
\]
Then
\begin{equation}\label{eq:polynomial-moment}
 \left|\sum_{x\in\F_q}P(r_A(x))
       -q\sum_{j=0}^\ell c_j\binom sjd^{-j}\right|
 \le\sqrt q\sum_{j=1}^\ell j|c_j|\binom sj.
\end{equation}
\end{lemma}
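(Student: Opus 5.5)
The argument is linear in $P$, so it reduces to the binomial moments. Since
\[
 \sum_{x}P(r_A(x))=\sum_{j=0}^\ell c_j\sum_x\binom{r_A(x)}j,
\]
the triangle inequality shows it suffices to prove, for each $0\le j\le\ell$,
\[
 \left|\sum_{x\in\F_q}\binom{r_A(x)}{j}-q\binom sj d^{-j}\right|\le j\binom sj\sqrt q .
\]
For $j=0$ both terms equal $q$, so the error is zero. This is consistent with the sum in \eqref{eq:polynomial-moment} starting at $j=1$.

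For $j\ge1$ I would use the standard double-counting identity. Here $\binom{r_A(x)}j$ counts the $j$-subsets $X\subseteq A$ with $x-a\in G_d$ for every $a\in X$, that is, with $x\in I_d(X)$. Hence
\[
 \sum_x\binom{r_A(x)}j=\sum_{X\subseteq A,\ \#X=j}\#I_d(X).
\]
There are $\binom sj$ such subsets. Lemma~\ref{lem:intersection} applies to each with $k=j\ge1$, and its crude form gives $|\#I_d(X)-q/d^j|\le j\sqrt q$. Summing over the $\binom sj$ subsets gives the required bound $j\binom sj\sqrt q$, with main term $q\binom sj d^{-j}$.

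Multiplying by $|c_j|$ and summing over $j$ then yields \eqref{eq:polynomial-moment}. The only step needing care is the identity $\binom{r}{j}=\#\{X\subseteq\{a: x-a\in G_d\}:\#X=j\}$. It also holds when $j>r_A(x)$, because of the convention $\binom nk=0$ for $k>n$. There is no genuine obstacle here, since all the analytic content is already in Lemma~\ref{lem:intersection}. The only subtlety is that the crude bound must be used in a form uniform in $j$, which that lemma explicitly provides.
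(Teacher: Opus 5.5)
Your proposal is correct and matches the paper's proof essentially step for step. You double count $\sum_x\binom{r_A(x)}{j}$ as $\sum_{\#J=j}\#I_d(J)$, apply the crude form of Lemma~\ref{lem:intersection} to each $J$, note that the $j=0$ term is exact and that terms with $j>s$ vanish, then expand $P$ and use the triangle inequality.
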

\begin{proof}
For $1\le j\le s$, $\binom{r_A(x)}j$ counts the choices of $j$ translates
$a+G_d$ containing $x$. Summing over the points and then counting by
the chosen shifts gives
\[
 \sum_{x\in\F_q}\binom{r_A(x)}j
 =\sum_{\substack{J\subseteq A\\\#J=j}}
   \#I_d(J).
\]
Applying Lemma~\ref{lem:intersection} to each $I_d(J)$ gives
\[
 \left|\sum_{x\in\F_q}\binom{r_A(x)}j-q\binom sjd^{-j}\right|
 \le j\binom sj\sqrt q.
\]
The term with $j=0$ is exact, and both terms vanish for $j>s$.
Expanding $P$ and applying the triangle inequality proves
\eqref{eq:polynomial-moment}.
\end{proof}

Write
\[
 S_d(A)=\#\{x\in\F_q:r_A(x)=0\}
\]
for the number of points outside $A+G_d$. Since
$A+G_d^0=(A+G_d)\cup A$, condition~\eqref{eq:covering} implies
$S_d(A)\le s$. The next consequence is a Bonferroni-type lower bound;
for a general treatment of the polynomial method, see
\cite[Theorem~2.1]{sibuya}.

\begin{corollary}\label{cor:bonferroni-lower}
Let $A,d,P$ be as in Lemma~\ref{lem:bonferroni}.
If $c_0>0$ and $P(n)\le0$ for every integer $1\le n\le s$, then
\begin{equation}\label{eq:bonferroni-lower}
 q\sum_{j=0}^\ell c_j\binom sjd^{-j}
 \le c_0S_d(A)+\sqrt q\sum_{j=1}^\ell j|c_j|\binom sj.
\end{equation}
Under \eqref{eq:covering}, one may replace $S_d(A)$ on the right by $s$.
\end{corollary}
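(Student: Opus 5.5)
This follows directly from Lemma~\ref{lem:bonferroni}, applied to the specific polynomial $P$ and combined with the sign condition on $P$ at the positive integers. We split the sum over $x\in\F_q$ according to whether $r_A(x)=0$ or not:
\[
 \sum_{x\in\F_q}P(r_A(x))=P(0)\,S_d(A)+\sum_{x:\,r_A(x)\ge1}P(r_A(x)).
\]
Since $\binom 0j=0$ for $j\ge1$ and $\binom00=1$, we have $P(0)=c_0$. For every $x$ with $r_A(x)\ge1$, the value $r_A(x)$ is an integer between $1$ and $s$, because $r_A(x)\le\#A=s$. The hypothesis therefore gives $P(r_A(x))\le0$, and so
\[
 \sum_{x\in\F_q}P(r_A(x))\le c_0S_d(A).
\]

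Next we use the lower half of \eqref{eq:polynomial-moment}:
\[
 \sum_{x}P(r_A(x))\ge q\sum_{j=0}^\ell c_j\binom sjd^{-j}-\sqrt q\sum_{j=1}^\ell j|c_j|\binom sj.
\]
Combining this with the upper bound $c_0S_d(A)$ and rearranging gives \eqref{eq:bonferroni-lower}.

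For the final sentence, recall that \eqref{eq:covering} gives $S_d(A)\le s$, since the points missed by $A+G_d$ must lie in $A$. Because $c_0>0$, we get $c_0S_d(A)\le c_0s$, so $S_d(A)$ may be replaced by $s$ on the right-hand side.

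The argument has no real obstacle. The only points needing care are that $P(0)=c_0$, which uses the convention $\binom nk=0$ for $k>n$, and that the positivity of $c_0$ is exactly what makes the replacement of $S_d(A)$ by $s$ legitimate.
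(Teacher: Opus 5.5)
Your proposal is correct and follows the paper's proof: the sign conditions give $\sum_xP(r_A(x))\le c_0S_d(A)$, combining this with the lower half of \eqref{eq:polynomial-moment} yields the inequality, and $S_d(A)\le s$ under the covering hypothesis together with $c_0>0$ justifies the final replacement.
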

\begin{proof}
The sign conditions give $\sum_xP(r_A(x))\le c_0S_d(A)$.
The assertions follow from \eqref{eq:polynomial-moment} and
$S_d(A)\le s$ under the covering condition.
\end{proof}

For a positive odd integer $\ell$, a standard choice in
Corollary~\ref{cor:bonferroni-lower} is the Bonferroni polynomial
\[
 \prod_{j=1}^\ell\left(1-\frac tj\right)
 =\sum_{j=0}^\ell(-1)^j\binom tj.
\]
It equals $1$ at $t=0$ and is nonpositive at every positive integer;
see \cite[Chapter~2, Lemma~2.1]{halberstamrichert}.
For Theorem~\ref{thm:index-bound}, we keep the root $1$ and shift the
remaining roots one unit to the right.
For odd $\ell\ge3$, this gives
\begin{equation}\label{eq:shifted-weight}
\begin{aligned}
 P_\ell(t)&=\prod_{\substack{1\le j\le\ell+1\\j\ne2}}
             \left(1-\frac tj\right)
           =\sum_{j=0}^\ell c_j\binom tj,\\
 c_j&=(-1)^j\left(1-\frac{j(j-1)}{\ell(\ell+1)}\right).
\end{aligned}
\end{equation}
Among the integers $0\le t\le\ell+1$, the only nonzero values are
$P_\ell(0)=1$ and $P_\ell(2)=-2/(\ell(\ell+1))$.
Binomial inversion therefore gives the displayed formula for $c_j$.
The roots after $1$ still occur in adjacent integer pairs, so the sign
condition is preserved; see also \cite[Theorem~7.1(2)]{sibuya}.
Write the main and error coefficients as
\[
 T_\ell=\sum_{j=0}^\ell c_j\binom sjd^{-j},\qquad
 E_\ell=\sum_{j=1}^\ell j|c_j|\binom sj.
\]
Since $c_0=1$, \eqref{eq:bonferroni-lower} gives, under \eqref{eq:covering},
\begin{equation}\label{eq:bonferroni-covering}
 qT_\ell\le s+E_\ell\sqrt q.
\end{equation}

The shift introduces a loss at $t=2$ but delays the tail to $t=\ell+2$.
Estimating these two losses separately allows us to take a degree just
below $4s/d$ while retaining a fixed positive proportion of
$\alpha_d^s$ in $T_\ell$. Keeping the actual coefficients in the error
estimate gives the same factor, which cancels in the final comparison.

\begin{proof}[Proof of Theorem~\ref{thm:index-bound}]
Counting gives $d<2s$. If $s<d$, it also gives
\[
 q\le\frac{s(d-1)}{d-s}\le s^2<d^{8s/d}.
\]
For $2\le d\le9$, apply $\log x\le x/\mathrm e$ with $x=s/2$
in \eqref{eq:uniform} to obtain
\[
 q\le s^2 4^{s-1}
 \le\exp\bigl((\log4+1/\mathrm e)s\bigr)\le d^{8s/d},
\]
since $\log4+1/\mathrm e<\min_{2\le d\le9}8\log d/d$.

We may therefore assume $d\ge10$ and put $\mu=s/d\ge1$.
Take $\ell$ to be the least odd integer
with $\ell\ge4\mu-2$. Then $3\le\ell<4\mu\le\ell+2<s$.
Using $\binom st\binom tj=\binom sj\binom{s-j}{t-j}$ and the
binomial theorem in the expansion \eqref{eq:shifted-weight}, we obtain
\[
 T_\ell=\sum_{t=0}^s P_\ell(t)\binom st d^{-t}\alpha_d^{s-t}.
\]
The terms with $t=0$ and $t=2$ are the only contributions below
$\ell+2$. For integers $t\ge\ell+2$,
\[
 -P_\ell(t)=\frac{2(\ell+2)}{t(t-2)}\binom t{\ell+2}
 \le\frac2\ell\binom t{\ell+2}.
\]
Summing these contributions, using the same identity for the tail, gives
\begin{equation}\label{eq:shifted-main-bound}
 T_\ell\ge\alpha_d^s
 \left(1-\frac{s(s-1)}{\ell(\ell+1)(d-1)^2}\right)
 -\frac2\ell\binom s{\ell+2}d^{-\ell-2}.
\end{equation}
Since $\mu\le(\ell+2)/4$ and $\ell\ge3$, the first loss is at most
\[
 \frac{s(s-1)}{\ell(\ell+1)(d-1)^2}
 \le\left(\frac d{d-1}\right)^2
       \frac{(\ell+2)^2}{16\ell(\ell+1)}
 \le\frac{25}{192}\left(\frac{10}{9}\right)^2<\frac16.
\]
For the tail, we use the comparison
\[
 (\mathrm e/4)^{4\mu}\le(5/8)^\mu\alpha_d^s,
\]
which follows from
$(\mathrm e/4)^4<(5/8)(9/10)^{10}\le(5/8)\alpha_d^d$,
since $\alpha_d^d$ increases with $d$.
Stirling's bound $k!\ge\sqrt{2\pi k}(k/\mathrm e)^k$ and
$\ell+2\ge4\mu$ now give
\[
 \frac2\ell\binom s{\ell+2}d^{-\ell-2}
 \le\frac{2(\mathrm e/4)^{4\mu}}{\ell\sqrt{2\pi(\ell+2)}}
 \le\frac1{12}\alpha_d^s,
\]
where the last step uses $\mu\ge1$, $\ell\ge3$, and
$5/(12\sqrt{10\pi})<1/12$.
Combining the two losses in \eqref{eq:shifted-main-bound}, we obtain
\begin{equation}\label{eq:positive-truncation}
 T_\ell\ge\tfrac34\alpha_d^s.
\end{equation}

For the error, \eqref{eq:shifted-weight} gives
\[
 j|c_j|=\frac{j(\ell+j)(\ell-j+1)}{\ell(\ell+1)}
 \le2(\ell-j+1)\qquad(1\le j\le\ell).
\]
Using $\binom sj\le s^j/j!$ and summing the weighted geometric series in
$h=\ell-j$ gives
\[
 E_\ell\le\frac{2s^\ell}{\ell!(1-\ell/s)^2}
 \le\frac{50}{9\sqrt{2\pi\ell}}(\mathrm es/\ell)^\ell,
\]
where we used $\ell/s<2/5$ and Stirling's bound.
Since $(4\mu/\ell)^\ell\le\exp(4\mu-\ell)$, the preceding comparison gives
\[
 \frac{E_\ell}{d^{4\mu}\alpha_d^s}
 \le\frac{50}{9\sqrt{2\pi\ell}}(5/8)^\mu(4/d)^{4\mu-\ell}
 \le\frac9{16}.
\]
For the last inequality, if $\ell=3$, use $\mu\ge1$ and
$4\mu-\ell\ge1$; if $\ell\ge5$, use $\mu>5/4$ and
$50(5/8)^{5/4}/(9\sqrt{10\pi})<9/16$.
Finally, $q\ge d+1>9$ and $E_\ell\ge s$, so
$s+E_\ell\sqrt q\le\tfrac43E_\ell\sqrt q$.
Combining \eqref{eq:bonferroni-covering} with the bounds for $T_\ell$
and $E_\ell$ gives
\[
 \sqrt q\le\frac{4E_\ell}{3T_\ell}\le d^{4\mu}.
\]
Squaring proves the theorem.
\end{proof}
 
\section{Sharper polynomial bounds}\label{sec:polynomial-tests}
The preceding proof chooses the degree according to $s/d$.
For $s\le2d$, we use the fixed choice $\ell=3$ in
\eqref{eq:shifted-weight}:
\[
 P_3(t)=\prod_{j\in\{1,3,4\}}\left(1-\frac tj\right),
\]
which also occurs in
\cite[p.~273, proof of Proposition~4.1(2-2)]{sibuya}.
Estimating its main coefficient $T_3$ directly gives a sixth-power bound
when $s/d$ stays a fixed distance below $2$, and an eighth-power bound
up to the endpoint. The small cases are handled separately by the
counting bound and \eqref{eq:uniform}.

\begin{theorem}\label{thm:cubic}
Suppose that \eqref{eq:covering} holds. For each $0<\varepsilon\le1$,
\begin{equation}\label{eq:sixth}
 s\le(2-\varepsilon)d\quad\Longrightarrow\quad
 q\le4\varepsilon^{-2}s^6.
\end{equation}
Moreover,
\begin{equation}\label{eq:eighth}
 s\le2d\quad\Longrightarrow\quad q\le4s^8.
\end{equation}
\end{theorem}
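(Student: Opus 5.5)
We apply \eqref{eq:bonferroni-covering} with the fixed degree $\ell=3$. By \eqref{eq:shifted-weight} the coefficients are $c_0=1$, $c_1=-1$, $c_2=5/6$, $c_3=-1/2$. Hence
\[
 T_3=1-\frac sd+\frac56\binom s2d^{-2}-\frac12\binom s3d^{-3},\qquad
 E_3=s+\frac53\binom s2+\frac32\binom s3 .
\]
The plan is to get a lower bound for $T_3$ in terms of $s$, $d$, and the distance of $x=s/d$ from $2$, and an upper bound $E_3\le\frac14s^3$ up to lower-order terms. We then read off a bound for $\sqrt q$ from $qT_3\le s+E_3\sqrt q$, and square.

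\textbf{Analysing $T_3$.} Write $x=s/d$ and expand the binomials exactly:
\[
 \binom s2d^{-2}=\frac{x^2}2-\frac x{2d},\qquad
 \binom s3d^{-3}=\frac{x^3}6-\frac{x^2}{2d}+\frac x{3d^2}.
\]
This gives
\[
 T_3=f(x)+\frac1d\Bigl(\frac{x^2}4-\frac{5x}{12}\Bigr)-\frac{x}{6d^2},
 \qquad f(x)=1-x+\frac5{12}x^2-\frac1{12}x^3 .
\]
At $x=2$ we have $f(2)=0$ and $f'(2)=-1/3$. Moreover $f'(x)=-1+\frac56x-\frac14x^2$ has negative discriminant, so $f'<0$ everywhere and $f$ is decreasing. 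We expect $f(x)\ge\frac13(2-x)$ on $[0,2]$. To check this, observe that $f(x)-\frac13(2-x)$ vanishes at $2$ and factors as $(2-x)$ times a quadratic that is nonnegative on $[0,2]$.

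\textbf{The two regimes.}
\begin{itemize}
\item For $s\le(2-\varepsilon)d$ this yields $T_3\ge\varepsilon/3-O(1/d)$, up to the $1/d$ correction terms.
\item At the endpoint $s=2d$ the main term vanishes. The correction is then $\frac1d(1-\frac56)-\frac1{3d^2}=\frac1{6d}-\frac1{3d^2}>0$, which is of order $1/s$.
\end{itemize}
For general $s\le2d$ we combine the two estimates. Since $f$ is decreasing, $T_3$ stays at least of order $1/(3s)$ on the whole range; the bracket $\frac{x^2}4-\frac{5x}{12}$ is negative for small $x$, but there $f(x)$ is bounded away from zero.

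\textbf{Conclusion and small cases.} We have $E_3\le s+\frac56s^2+\frac14s^3\le\frac12s^3$ for $s\ge4$ or so. Also $s\le E_3\sqrt q$ is absorbed as in Theorem~\ref{thm:index-bound}.
\begin{itemize}
\item In the first regime, $\sqrt q\lesssim E_3/T_3\lesssim\frac{3}{4\varepsilon}s^3$, giving $q\le\frac{9}{16}\varepsilon^{-2}s^6$ plus slack, well within $4\varepsilon^{-2}s^6$.
\item At the endpoint, $\sqrt q\lesssim\frac14s^3\cdot 3s$, giving $q\lesssim s^8$ within $4s^8$.
\end{itemize}
The $O(1/d)$ losses in the $\varepsilon$-regime must be controlled. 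When $d\gtrsim1/\varepsilon$ they are absorbed into $\varepsilon/3$. When $d$ is small (so $s<2d$ is small), we instead use the counting bound $q\le s(d-1)/(d-s)$ for $s<d$, and \eqref{eq:uniform} for the finitely many small $s$ where $s^2 4^{s-1}\le4s^6$ or $4s^8$.

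\textbf{Main obstacle.} The main obstacle is the bookkeeping that makes the positivity of $T_3$ uniform with explicit constants. This is delicate near $x=2$, where everything rests on the $1/(6d)$ second-order term, and for small $d$, where the $1/d$ corrections compete with $\varepsilon/3$. I would first prove the clean inequality
\[
 T_3\ge\frac{2-x}3+\frac{c}{d}
\]
for a suitable explicit $c>0$ valid when $x\ge1$, treat $x<1$ by the counting bound, and then settle the remaining small parameters numerically via \eqref{eq:uniform}.
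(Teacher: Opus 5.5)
There is a genuine gap at the centre of your plan. Your overall route is the paper's: the cubic weight $\ell=3$, $T_3$ expanded in $x=s/d$ and $1/d$, a lower bound of the form $c(2-x)+c'/d$, and the small cases sent to the counting bound and \eqref{eq:uniform}. But the inequality on which the plan rests is false. Carrying out your own factorisation gives
\[
 f(x)-\tfrac13(2-x)=\frac{(2-x)(x-1)(x-2)}{12}=-\frac{(2-x)^2(x-1)}{12}.
\]
So the quadratic cofactor is negative on $(1,2)$, which is exactly the range left once the counting bound has disposed of $x<1$. Indeed, $\frac13(2-x)$ is the tangent to $f$ at $x=2$, and $f''(2)=-\frac16<0$, so $f$ lies below it. Hence your ``clean inequality'' $T_3\ge\frac{2-x}3+\frac cd$ for $x\ge1$ fails for every $c>0$ once $d$ is large and $x$ stays near $3/2$. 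The bound $T_3\ge\varepsilon/3-O(1/d)$ fails too: at $\varepsilon=\frac12$, $T_3\to f(\frac32)=\frac5{32}<\frac16$. The constant must be lowered. For instance, $f(x)=\frac{(2-x)(x^2-3x+6)}{12}\ge\frac5{16}(2-x)$, and the slack in $4\varepsilon^{-2}$ can absorb such a loss.

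Your treatment of the $1/d$ terms also does not close. Splitting at $d\asymp1/\varepsilon$ leaves the range $d\lesssim1/\varepsilon$, where $s$ can be about $2/\varepsilon$. Since $\varepsilon$ is arbitrary, this is not a finite set of cases. Moreover, \eqref{eq:uniform} grows like $4^s$, so it does not give $4\varepsilon^{-2}s^6$ there. Likewise, ``$f$ is decreasing'' does not carry the endpoint value of $T_3$ down to $s<2d$, because $T_3\ne f$.

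What you need is a single lower bound uniform in $d$, and this is how the paper argues. First remove $s<d$ (counting bound) and $s\le6$ (where \eqref{eq:uniform} gives $q\le s^6$); in the remaining range $d\ge4$. With $\theta=1/d\le\frac14$, completing the square gives $\partial T_3/\partial x\le-\frac3{16}$ at fixed $d$. Your endpoint value satisfies $\frac1{6d}-\frac1{3d^2}\ge\frac1{12d}$. Integrating from $x=2$ gives
\[
 T_3\ge\frac1{12d}+\frac3{16}(2-x)\ge\frac{2-x+s^{-1}}6 .
\]
Combined with $s+E_3\le s^3/3$ for $s\ge7$, this yields $q\le4s^6/(2-s/d+s^{-1})^2$. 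That single bound gives both \eqref{eq:sixth} and \eqref{eq:eighth}, with no $\varepsilon$-dependent case split.
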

\begin{proof}
We need only consider $s\le2d$. Every covering has $s\ge2$. If $s<d$, the elementary bound
$q\le s(d-1)/(d-s)\le s^2$ proves both assertions.
If $s\le6$, then \eqref{eq:uniform} gives $q\le s^6$, which also suffices.
We may therefore assume $d\le s\le2d$ and $s\ge7$, so $d\ge4$.

For the cubic weight, the coefficient of $q$ in
\eqref{eq:bonferroni-covering} is
\[
 T_3=1-\frac sd+\frac56\binom s2d^{-2}-\frac12\binom s3d^{-3}.
\]
To make its dependence on $s/d$ and $1/d$ explicit, put
\[
 \mu=\frac sd,\qquad \theta=\frac1d.
\]
Here $1\le\mu\le2$ and $0<\theta\le1/4$.
Then $T_3=\mathcal P_3(\mu,\theta)$, where
\begin{equation}\label{eq:cubic-mean}
 \mathcal P_3(\mu,\theta)
 :=1-\mu+\frac5{12}\mu(\mu-\theta)
       -\frac1{12}\mu(\mu-\theta)(\mu-2\theta).
\end{equation}
The part independent of $\theta$ factors as
\[
 \mathcal P_3(\mu,0)=\frac{(2-\mu)(\mu^2-3\mu+6)}{12}.
\]
This is positive for $\mu<2$ but vanishes at $\mu=2$, so the terms
involving $\theta$ are needed at the endpoint.
Completing the square in the derivative gives, for $0<\theta\le1/4$,
\[
 \frac{\partial\mathcal P_3}{\partial\mu}
 =-\frac14\left(\mu-\theta-\frac53\right)^2
   +\frac{\theta^2+5\theta-11/3}{12}\le-\frac3{16}.
\]
At the endpoint,
\[
 \mathcal P_3(2,\theta)=\frac{\theta(1-2\theta)}6\ge\frac\theta{12}.
\]
Integrating the derivative bound from $\mu$ to $2$, with $\theta$ fixed,
and then using $\theta=\mu/s$, we obtain
\begin{equation}\label{eq:cubic-positive}
\begin{aligned}
 \mathcal P_3(\mu,\theta)
 &\ge\frac{\theta}{12}+\frac3{16}(2-\mu)\\
 &=\frac{2-\mu+s^{-1}}6+\frac{(s-4)(2-\mu)}{48s}\\
 &\ge\frac{2-\mu+s^{-1}}6.
\end{aligned}
\end{equation}
The remainder is nonnegative since $s\ge7$ and $\mu\le2$.

The error coefficient $E_3=(3s^3+s^2+8s)/12$ satisfies
$s+E_3\le s^3/3$ for $s\ge7$.
Thus \eqref{eq:bonferroni-covering} and $\sqrt q\ge1$ give
\[
 q\mathcal P_3(\mu,\theta)\le\frac{s^3}{3}\sqrt q.
\]
Combining this with \eqref{eq:cubic-positive} and squaring gives
\begin{equation}\label{eq:cubic-finite}
 q\le\frac{4s^6}{(2-s/d+s^{-1})^2}.
\end{equation}
The initial elementary estimates give the same bound when $s<d$ or
$s\le6$. Finally, $2-s/d+s^{-1}\ge\varepsilon$ if
$s\le(2-\varepsilon)d$, and it is at least $s^{-1}$ if $s\le2d$.
This proves both assertions.
\end{proof}

\begin{remark}\label{rem:cubic-range}
The sixth-power bound extends the range $s\le3d/2$ of
\eqref{eq:previous-large-index} to every fixed ratio $s/d<2$.
On the original range, taking $\varepsilon=1/2$ gives $q\le16s^6$,
improving the earlier constant $256$.
At $s=2d$, the coefficient $\mathcal P_3(2,1/d)$ is of order $1/d$
for sufficiently large $d$. For this test, dividing by the coefficient and then
squaring introduces the two additional powers of $s$ in \eqref{eq:eighth}.
\end{remark}

At $s=d$, only $d-1$ representations remain after assigning one to
each point. The two-translate estimate then gives a fourth-power bound.

\begin{proposition}\label{prop:critical-line}
If \eqref{eq:covering} holds with $s=d$, then $q<d^4$.
\end{proposition}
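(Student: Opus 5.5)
The plan is to combine an excess-counting argument with the two-translate estimate of Lemma~\ref{lem:intersection}. The hint before the statement says that with $s=d$ only $d-1$ representations are left over. Counting with multiplicity,
\[
 \sum_{x\in\F_q}r_A(x)=s\cdot\frac{q-1}{d}=q-1 .
\]
Under \eqref{eq:covering}, every $x\notin A$ has $r_A(x)\ge1$, and $S_d(A)\le s=d$. I would take the excess $\sum_x (r_A(x)-1)_+$ as the key quantity. Since $\sum_x r_A(x)=q-1$ and at most $d$ points have $r_A(x)=0$, this excess is at most $d-1$. More precisely,
\[
 \sum_x\bigl(r_A(x)-1\bigr)_+=q-1-(q-S_d(A))=S_d(A)-1\le d-1 .
\]

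Next I would bound pair overlaps. The excess controls $\sum_x\binom{r_A(x)}2$, but not linearly, so it is better to look at a single pair. Every point lying in two translates $a+G_d$ and $b+G_d$ with $a\ne b$ contributes at least $1$ to the excess. Fixing one pair $\{a,b\}\subseteq A$ (possible since $s=d\ge2$) therefore gives
\[
 \#I_d(\{a,b\})\le\sum_x(r_A(x)-1)_+\le d-1 .
\]

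The lower bound comes from Lemma~\ref{lem:intersection} with $k=2$, using the sharp coefficient $\alpha_d^2$:
\[
 \#I_d(\{a,b\})\ge\frac q{d^2}-\alpha_d^2\sqrt q-\frac2d .
\]
Combining the two bounds yields
\[
 \frac q{d^2}\le (d-1)+\frac2d+\Bigl(1-\frac1d\Bigr)^2\sqrt q .
\]
Put $y=\sqrt q/d$, so that $y^2\le (d-1)+2/d+\alpha_d^2 d\,y$. I then need to show this forces $y<d$, i.e.\ $q<d^4$. Substituting $y=d$ gives $d^2$ on the left, while the right side is
\[
 (d-1)+\frac2d+(d-1)^2=d^2-d+\frac2d,
\]
which is less than $d^2$ for $d\ge2$. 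Since $y^2-\alpha_d^2 d\,y$ is increasing for $y\ge d$, this rules out $y\ge d$, giving $q<d^4$.

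The main obstacle is the bookkeeping in the last step: checking that the lower-order terms $2/d$ and $d-1$ do not push the threshold past $d^4$. The computation above suggests the margin is $d-2/d>0$, so it should close. If it does not, I would instead average over all $\binom d2$ pairs. This uses $\sum_x\binom{r_A(x)}{2}$ together with the convexity bound $\binom r2\le\binom{d}{2}(r-1)/(d-1)$ for $r\le d$, which should only strengthen the inequality.
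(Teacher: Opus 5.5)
Your proof is correct and is essentially the paper's argument. The paper counts representations through $G_d^0$, so its excess $\sum_x(r_A^0(x)-1)$ is exactly $d-1$. You use $r_A$ and obtain the excess $S_d(A)-1\le d-1$ instead. Both routes lead to the same inequality $q/d^2-\alpha_d^2\sqrt q-2/d\le d-1$ from the $k=2$ case of Lemma~\ref{lem:intersection}. Your check at $\sqrt q/d=d$ is the paper's evaluation at $\sqrt q=d^2$, rescaled by $d^2$, and the averaging fallback you mention is not needed.
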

\begin{proof}
Write
\[
 r_A^0(x)=\#\{a\in A:x-a\in G_d^0\}
\]
for the representation count including zero.
Since $r_A^0(x)\ge1$ and $s=d$, we have
\[
 \sum_{x\in\F_q}(r_A^0(x)-1)=d-1.
\]
Choose distinct $a,a'\in A$ and put $t=a'-a$.
Each point common to these two translates contributes at least one
to this excess. Hence Lemma~\ref{lem:intersection} and
$G_d\subseteq G_d^0$ give
\[
 \frac q{d^2}-\frac{(d-1)^2}{d^2}\sqrt q-\frac2d
 \le\#(G_d^0\cap(G_d^0+t))\le d-1.
\]
Writing $y=\sqrt q$ and rearranging gives
\[
 y^2-(d-1)^2y-d^3+d^2-2d\le0.
\]
The quadratic has a negative constant term and hence exactly one
positive root. Its value at $y=d^2$ is $d(d^2-2)>0$, so $y<d^2$
and $q<d^4$.
\end{proof}

\section{Consequences for additive coverings}\label{sec:consequences}
We first bound the covering number and the index for every shift set,
and give an example attaining the order of the index restriction.
We then compare covering counts and derive a uniform covering-probability
formula for logarithmic sizes, leading to the simultaneous index
classification in Corollary~\ref{cor:counting-threshold}.

\subsection{Covering numbers and index growth}
For $X\subseteq\F_q$, write
\[
 U_d(X)=\#\bigl(\F_q\setminus(X+G_d^0)\bigr)
\]
for the number of uncovered points. The minimum number of translates
needed to cover the field is
\[
 \tau(q,d)=\min\{\#A:U_d(A)=0\}.
\]
The following bounds combine Theorem~\ref{thm:index-bound} with the
standard averaging argument for translates \cite[Section~3]{bjr}.

\begin{proposition}\label{prop:covering-number}
For every $d>1$ dividing $q-1$,
\[
 \frac{d\log q}{8\log d}\le\tau(q,d)\le\lceil d\log q\rceil.
\]
\end{proposition}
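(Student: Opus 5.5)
The plan has two independent halves: the lower bound comes from Theorem~\ref{thm:index-bound}, and the upper bound from a first-moment averaging argument.

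For the lower bound, let $A$ be any covering set with $\#A=s=\tau(q,d)$. Theorem~\ref{thm:index-bound} gives $q\le d^{8s/d}$. Taking logarithms yields $\log q\le 8s\log d/d$. Since $d\ge2$, we have $\log d>0$, so we may divide to get $s\ge d\log q/(8\log d)$. This half needs nothing beyond the theorem.

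For the upper bound, put $s=\lceil d\log q\rceil$ and choose $a_1,\dots,a_s$ independently and uniformly from $\F_q$ (the resulting set may be smaller than $s$; we pad it afterwards). Fix $x\in\F_q$. The event $x\in a_i+G_d^0$ holds exactly when $a_i\in x-G_d^0$, which has probability $\#G_d^0/q=((q-1)/d+1)/q\ge 1/d$. The choices are independent, so $x$ is missed by every translate with probability at most $(1-1/d)^s\le e^{-s/d}\le e^{-\log q}=1/q$.

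Summing over $x$ gives $\mathbb E\,U_d(\{a_1,\dots,a_s\})\le q\cdot q^{-1}=1$, which alone only shows that some choice leaves at most one uncovered point. Two routes close this gap. The first is to note that the bound $\#G_d^0/q\ge1/d$ is strict, since $((q-1)/d+1)/q>1/d$ because $d>1$. Hence the expectation is strictly less than $1$, and some choice has $U_d=0$. The second is to use that $U_d$ is integer-valued, so the strict bound again forces a choice with $U_d=0$. I would state the strictness explicitly, as $1-\#G_d^0/q<1-1/d$. This set has at most $s$ elements, and adding arbitrary further points keeps the covering property, so $\tau(q,d)\le s$.

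The main point of care is this strictness at the boundary. When $d\log q$ is an integer, the first-moment bound is exactly $1$ unless the strict inequality is used. Everything else is routine.
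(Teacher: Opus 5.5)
Your proof is correct. The lower bound is exactly the paper's: take logarithms in Theorem~\ref{thm:index-bound} with $s=\tau(q,d)$.

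\textbf{Upper bound, comparison of routes.} For the upper bound you use a one-shot first-moment argument with $s$ independent uniform shifts. The paper instead runs a greedy iteration. From the identity $\sum_{a\in\F_q}U_d(A\cup\{a\})=(q-\#G_d^0)U_d(A)$ and $\#G_d^0>q/d$, it picks at each step a shift with $U_d(A\cup\{a\})<(1-1/d)U_d(A)$, so that $U_d(A)<q(1-1/d)^t$ after $t$ steps.

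Both arguments rest on the same count: a point is covered by $a+G_d^0$ for exactly $\#G_d^0$ values of $a$. The greedy step is precisely the method of conditional expectations applied to your random construction. Both also give the slightly sharper intermediate bound $\lceil\log q/\log(d/(d-1))\rceil$. The paper's version is constructive and automatically produces distinct shifts. Yours is shorter but has to account for repeated choices.

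\textbf{Two presentational points.} First, padding is unnecessary, and it is not always possible: $\lceil d\log q\rceil$ can exceed $q$, for instance when $q=4$ and $d=3$. You get $\tau(q,d)\le\#\{a_1,\dots,a_s\}\le s$ directly, since $\tau$ is a minimum. Second, your ``two routes'' are really one argument. You need the strict inequality $\mathbb E\,U_d<1$ together with the integrality of $U_d$. Strictness is also available independently from $(1-1/d)^s<e^{-s/d}$ for $s\ge1$.
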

\begin{proof}
Taking logarithms in Theorem~\ref{thm:index-bound} with
$s=\tau(q,d)$ gives the lower bound.

For the upper bound, suppose that $U_d(A)>0$. Each uncovered point
remains uncovered after adding $a+G_d^0$ for exactly $q-\#G_d^0$
choices of $a$. Hence
\[
 \sum_{a\in\F_q}U_d(A\cup\{a\})=(q-\#G_d^0)U_d(A).
\]
Since $\#G_d^0>q/d$, averaging gives a shift that leaves fewer than
$(1-1/d)U_d(A)$ points uncovered. Starting with $A=\varnothing$,
repeat this choice while $U_d(A)>0$. After $t\ge1$ steps, either the
field is already covered or
\[
 U_d(A)<q(1-1/d)^t.
\]
Since $U_d(A)$ is a nonnegative integer, the field is covered once
$q(1-1/d)^t\le1$. Thus
\[
 \tau(q,d)\le\left\lceil\frac{\log q}{\log(d/(d-1))}\right\rceil
 \le\lceil d\log q\rceil.
\]
The last inequality uses $\log(d/(d-1))\ge1/d$.
\end{proof}

\begin{remark}\label{rem:covering-number}
The elementary count gives a lower bound of order $d$. The additional
factor $\log q/\log d$ in Proposition~\ref{prop:covering-number} is significant
when $\log d=o(\log q)$. The lower and upper bounds differ by a factor
of order $\log d$; for fixed $d$, they give
$\tau(q,d)\asymp_d\log q$.
\end{remark}

Inverting the lower bound for $\tau(q,d)$ restricts the index in terms
of $s/\log q$.

\begin{corollary}\label{cor:index-growth}
Suppose that \eqref{eq:covering} holds. Then
\[
 d\ll\frac{s}{\log q}\log\left(2+\frac{s}{\log q}\right),
\]
with an absolute implied constant.
\end{corollary}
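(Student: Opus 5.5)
The plan is to invert Theorem~\ref{thm:index-bound}. In logarithmic form it gives
\[
 \log q\le 8s\frac{\log d}{d},
 \qquad\text{that is,}\qquad
 \frac{d}{\log d}\le 8x,\quad x:=\frac{s}{\log q}.
\]
Since $q\ge d+1\ge3$, we have $\log q\ge\log 3>1$. Moreover $s\ge2$ always, because $\#G_d^0<q$ when $d>1$, so a single translate cannot cover. Both facts will be used to handle small $x$. The task is then purely analytic: show that $d/\log d\le 8x$ forces $d\le C\,x\log(2+x)$ for an absolute constant $C$.

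I will split into two cases according to $d$.

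\emph{Case $d\le 3$.} Here the claim is trivial once $x\log(2+x)$ is bounded below. In the relevant range $x\ge c$ for an absolute $c>0$: from $d/\log d\le 8x$ and the fact that $t/\log t\ge \mathrm e$ for $t>1$, we get $x\ge \mathrm e/8$ for every $d\ge2$. Hence $x\log(2+x)\ge(\mathrm e/8)\log2$, and $d\le3\le C\,x\log(2+x)$ for a suitable $C$.

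\emph{Case $d\ge 3$.} The function $t\mapsto t/\log t$ is increasing for $t\ge\mathrm e$. Suppose $d>C x\log(2+x)$ with $C$ large, say $C=32$. Then monotonicity gives
\[
 \frac{d}{\log d}>\frac{Cx\log(2+x)}{\log\bigl(Cx\log(2+x)\bigr)}.
\]
This needs $Cx\log(2+x)\ge\mathrm e$, which holds since $x\ge \mathrm e/8$. Next I will show that
\[
 \log\bigl(Cx\log(2+x)\bigr)\le\frac C8\log(2+x)
\]
for all $x\ge \mathrm e/8$. This reduces to checking that $\log C+\log x+\log\log(2+x)\le 4\log(2+x)$. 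That inequality is routine: $\log x\le\log(2+x)$, $\log\log(2+x)\le\log(2+x)$, and $\log 32\le 2\log(2+x)$ once $x\ge 4$. For the bounded range $\mathrm e/8\le x\le 4$ one either checks directly or enlarges $C$ slightly. Combining the two displays gives $d/\log d>8x$, contradicting the theorem.

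The only mildly delicate step is fixing a clean constant. The main point to watch is handling small $x$, which is exactly why the ``$2+$'' appears inside the logarithm. I expect no genuine obstacle beyond choosing $C$ so that the elementary inequality holds uniformly for $x\ge \mathrm e/8$.
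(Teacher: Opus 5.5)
Your proposal is correct and follows the paper's route: you rewrite Theorem~\ref{thm:index-bound} as $d/\log d\le 8s/\log q$ and invert this elementarily. Your monotonicity-and-contradiction argument, including the check for $\mathrm e/8\le x\le 4$ (which does hold with $C=32$), replaces the paper's shorter step: it puts $\Lambda=8s/\log q\ge\mathrm e$, writes $\log d=\log\Lambda+\log(d/\Lambda)$, and uses $\log t\le t/2$ to get $d\le 2\Lambda\log\Lambda$ directly, with no case split or fitting of constants.
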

\begin{proof}
Put $\Lambda=8s/\log q$. Proposition~\ref{prop:covering-number} gives
$d\le \Lambda\log d$ and $\Lambda\ge d/\log d\ge\mathrm e$.
Since $\log t\le t/2$ for $t>0$,
\[
 \frac d\Lambda\le\log \Lambda+\log\frac d\Lambda
 \le\log \Lambda+\frac d{2\Lambda}.
\]
Thus $d\le2\Lambda\log \Lambda$, which proves the corollary.
\end{proof}

\begin{remark*}
Corollary~\ref{cor:index-growth} bounds $d$ uniformly over all shift
sets when $s=O(\log q)$, and forces $s/\log q\to\infty$ when
$d\to\infty$. Together with the averaging upper bound, this gives
\[
 \tau(q,d)=O(\log q)\quad\Longleftrightarrow\quad d=O(1)
\]
along sequences with $q\to\infty$.
\end{remark*}

The following example attains the order of the index bound in
Corollary~\ref{cor:index-growth}.

\begin{example}\label{prop:quadratic-covering}
Take an odd prime power $q$ and put
\[
 C_m=\{x\in\F_{q^2}:x^m=x\}\qquad(m=q,2q-1).
\]
Choose $u\in\F_{q^2}^\times$ of order $2(q-1)$. Then $u^2$ generates
$\F_q^\times$ and $u\notin\F_q$, so
\[
 C_q=\F_q,\qquad C_{2q-1}=\F_q\cup u\F_q.
\]
In the basis $1,u$, $C_{2q-1}$ is the union of the coordinate axes.
The $q$ shifts from $\F_q$ cover $\F_{q^2}$. With fewer than $q$
centers, their two coordinate projections each omit a value, leaving
the point with these coordinates uncovered. Since $C_{2q-1}\setminus\{0\}$
has index $(q+1)/2$, we obtain
\[
 C_q+C_{2q-1}=\F_{q^2},\qquad
 \tau\!\left(q^2,\frac{q+1}{2}\right)=q.
\]

Neither nonzero subgroup can be replaced by a proper subgroup while
the other summand is fixed. Indeed, if $J$ is a proper subgroup of
$C_{2q-1}\setminus\{0\}$, then $\#J\le q-1$. A covering
$\F_q+(J\cup\{0\})=\F_{q^2}$ forces $\#J=q-1$ by counting,
and hence $J=\F_q^\times$, but then the sum is $\F_q$.
Any proper subgroup of $\F_q^\times$, with zero adjoined, has fewer
than $q$ elements and is ruled out by the covering number.

The restriction to subgroups is essential: the smaller set
$u\F_q\subsetneq C_{2q-1}$ still satisfies $\F_q+u\F_q=\F_{q^2}$,
but its nonzero part is not a multiplicative subgroup. This minimality
excludes proper multiples of the index, without asserting that the
index is numerically largest.

For the shift set $A=\F_q$, we have $s=q$ and $d=(q+1)/2$.
Since the ambient field has $q^2=s^2$ elements,
\[
 \frac{s}{2\log q}\log\left(2+\frac{s}{2\log q}\right)\sim d
 \qquad(q\to\infty).
\]
The logarithmic factor cannot be omitted, since $s/(2\log q)=o(d)$.
\end{example}

\subsection{Counting shift sets}
We now count covering shift sets of a prescribed size. Fix an integer
$s$ with $1\le s\le q$ and write $N=\binom qs$. All sums over $A$ in
this subsection range over the $s$-element subsets of $\F_q$.

For $d>1$ dividing $q-1$, put
\[
 n_d=U_d(\{0\})=q-\#G_d^0=(q-1)\alpha_d.
\]
Count pairs consisting of an $s$-element shift set and an $r$-element
subset of its uncovered points. The bijection
$(A,X)\mapsto(-X,-A)$ preserves the avoidance condition, since
$(-a)-(-x)=x-a$, and interchanges the two sizes. Thus, for $1\le r\le q$,
\begin{equation}\label{eq:uncovered-double-count}
 \sum_A\binom{U_d(A)}r
 =\sum_{\substack{X\subseteq\F_q\\\#X=r}}\binom{U_d(X)}s.
\end{equation}
Taking $r=1$ gives the first-moment identity
\[
 \sum_A U_d(A)=q\binom{n_d}s.
\]
Thus the average number of uncovered points is nondecreasing in $d$.
Let $N_d$ be the number of $s$-element shift sets that fail to cover
with $G_d^0$. Since $1\le U_d(A)\le q$ on these sets,
\[
 \binom{n_d}s\le N_d\le q\binom{n_d}s.
\]
In particular, $N_d/N\le q\alpha_d^s$. These bounds hold for every $s$.

To estimate higher moments, we first bound $U_d(X)$ for a $k$-element
set $X\subseteq\F_q$, $k\ge1$. Apply Lemma~\ref{lem:bonferroni}
with $A=X$ and the complete Bonferroni polynomial
\[
 P(t)=\sum_{j=0}^k(-1)^j\binom tj.
\]
On the integers $0\le t\le k$, this is the indicator of $t=0$, so
$\sum_xP(r_X(x))=S_d(X)$. Its main term in
\eqref{eq:polynomial-moment} is $q\alpha_d^k$, and
$\sum_{j=1}^k j\binom kj=k2^{k-1}$. Since
$0\le S_d(X)-U_d(X)\le k$, we obtain
\begin{equation}\label{eq:uncovered-centers}
 \left|U_d(X)-q\alpha_d^k\right|
 \le k2^{k-1}\sqrt q+k.
\end{equation}
This estimate is uniform in $d,k,X$ and allows $k$ to grow with $q$.

For $s=O(\log q)$ and sufficiently large $q$, we estimate the first two moments
uniformly in $d$. The product formula gives
\[
 \frac1N\sum_A U_d(A)
 =q\alpha_d^s\bigl(1+O(s^2/q)\bigr).
\]
For every two-element set $X$, \eqref{eq:uncovered-centers} gives
$U_d(X)=q\alpha_d^2+O(\sqrt q)$. Taking $r=2$ in
\eqref{eq:uncovered-double-count} and applying the product formula
again, with $\alpha_d\ge1/2$, gives
\[
 \frac1N\sum_A U_d(A)(U_d(A)-1)
 =q(q-1)\alpha_d^{2s}\bigl(1+O(s/\sqrt q)\bigr).
\]
Only $N_d$ terms are nonzero, so Cauchy--Schwarz and the two
moment estimates give
\[
 \frac{N_d}{N}
 \ge\frac{\left(\sum_A U_d(A)\right)^2}{N\sum_A U_d(A)^2}
 =\frac{q\alpha_d^s}{1+q\alpha_d^s}
   \bigl(1+O(s/\sqrt q)\bigr).
\]
Together with the exact bounds on $N_d$, this gives the following
comparison of covering counts.

\begin{proposition}\label{prop:min-index-comparison}
Fix a prime $d_0$, and let $q$ be a sufficiently large prime power
for which $d_0$ is the smallest prime divisor of $q-1$. If
\[
 (d_0-\tfrac12)\log q\le s\le q,
\]
then $G_{d_0}^0$ maximizes the number of covering $s$-element shift
sets among the sets $G_d^0$ with $d>1$ and $d\mid q-1$.
\end{proposition}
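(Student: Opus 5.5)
Since the number of covering sets is $N-N_d$, we must show $N_{d_0}\le N_d$ for every $d>1$ dividing $q-1$. If $d=d_0$ there is nothing to prove. Every $d>1$ has a prime factor $p\ge d_0$, and $G_d\subseteq G_p$. Hence any shift set failing to cover with $G_p^0$ also fails with $G_d^0$, so $N_p\le N_d$. It therefore suffices to compare $d_0$ with primes $p>d_0$ dividing $q-1$, so that $d\ge d_0+1$, $\alpha_d\ge 1-1/(d_0+1)$ and $\alpha_{d_0}=1-1/d_0$.

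For the two bounds we use the exact upper estimate $N_{d_0}\le q\binom{n_{d_0}}s$ and the two lower estimates for $N_d$: the exact bound $N_d\ge\binom{n_d}s$, and the Cauchy--Schwarz bound
\[
N_d/N\ge\frac{q\alpha_d^s}{1+q\alpha_d^s}\bigl(1+O(s/\sqrt q)\bigr),
\]
valid when $s=O(\log q)$. We split according to the size of $s$.

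\emph{Case $s\le C\log q$.} Here $C$ is a constant depending on $d_0$, chosen below. The product formula gives $\binom{n}{s}/N=\alpha^s(1+O(s^2/q))$ when $n=(q-1)\alpha$. So $N_{d_0}/N\le q\alpha_{d_0}^s(1+o(1))$. Set $x=q\alpha_d^s$ and $y=q\alpha_{d_0}^s$. The hypothesis $s\ge(d_0-\tfrac12)\log q$ together with $\log(1-1/d_0)\le -1/d_0-1/(2d_0^2)$ should give
\[
y\le q^{1-(d_0-1/2)(1/d_0+1/(2d_0^2))}=q^{-\delta}
\]
with $\delta=1/(4d_0^2)>0$; this needs checking, but the expansion gives exactly this. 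We need $y(1+o(1))\le \frac{x}{1+x}(1+O(s/\sqrt q))$. Since $x/y=(\alpha_d/\alpha_{d_0})^s\ge(1+c/d_0^2)^s$ grows like a positive power of $q$, $y$ is much smaller than $x$. If $x\le1$ then $x/(1+x)\ge x/2\gg y$. If $x>1$ the right side is at least $1/2+o(1)$, while $y\to0$. In both subcases the $o(1)$ and $O(s/\sqrt q)$ errors are absorbed for $q$ large.

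\emph{Case $s> C\log q$.} Use only exact bounds: compare $q\binom{n_{d_0}}s$ with $\binom{n_d}s$. The ratio is
\[
\binom{n_{d_0}}s\Big/\binom{n_d}s=\prod_{i<s}\frac{n_{d_0}-i}{n_d-i}\le (n_{d_0}/n_d)^s,
\]
since $n_{d_0}<n_d$ makes each factor at most the first. Also $n_{d_0}/n_d=\alpha_{d_0}/\alpha_d\le 1-c/d_0^2$ with $c>0$ absolute. Choosing $C$ with $C\cdot c/d_0^2>1$ makes $q(n_{d_0}/n_d)^s<1$, which gives $N_{d_0}<N_d$. If $s>n_{d_0}$ then $N_{d_0}=0$ and the claim is trivial.

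The main obstacle is the small-$s$ case near the lower threshold $s\approx(d_0-\tfrac12)\log q$. There the margin $q^{-\delta}$ is only a fixed power of $q$, so we must verify that $\log(1-1/d_0)\le-1/d_0-1/(2d_0^2)$ yields $\delta>0$ exactly. We must also check that the uniformity in $d$ of the $O(s/\sqrt q)$ and $O(s^2/q)$ errors (which rests on $\alpha_d\ge1/2$) lets the constants be taken independent of $d$. The finitely many $d$ issue does not arise, since all $d$ are treated at once via $\alpha_d\ge1-1/(d_0+1)$.
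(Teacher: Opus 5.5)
Your architecture is the paper's. The exact bound $N_{d_0}/N_d\le q(\alpha_{d_0}/\alpha_d)^s\le q(1-d_0^{-2})^s$ handles large $s$. For $s=O_{d_0}(\log q)$, the Cauchy--Schwarz lower bound for $N_d/N$ together with $N_{d_0}/N\le q\alpha_{d_0}^s$ reduces everything to showing $y=q\alpha_{d_0}^s\to0$ and $(\alpha_{d_0}/\alpha_d)^s\to0$. Your reduction to prime $d$ and your split into $x\le1$ and $x>1$ are harmless variants. The gap is exactly the step you flagged. Since $(d_0-\tfrac12)\bigl(\tfrac1{d_0}+\tfrac1{2d_0^2}\bigr)=1-\tfrac1{4d_0^2}$, the bound $\log(1-1/d_0)\le-1/d_0-1/(2d_0^2)$ only yields $y\le q^{+1/(4d_0^2)}$, not $q^{-1/(4d_0^2)}$: the sign of your exponent is wrong. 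In the subcase $x>1$ you need $y\to0$, so as written the small-$s$ case does not close. This is precisely where the constant $d_0-\tfrac12$ in the hypothesis matters.

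The missing fact is the strict inequality $(d_0-\tfrac12)\log\frac{d_0}{d_0-1}>1$. In other words, the covering threshold $\log q/\log\frac{d_0}{d_0-1}$ for $G_{d_0}^0$ must lie strictly below $(d_0-\tfrac12)\log q$. Low-order Taylor truncations are too weak for this; even the third-order bound gives exactly $1$ at $d_0=2$. The paper obtains it from the midpoint inequality for the convex function $1/t$: $\log\frac{d_0}{d_0-1}=\int_{d_0-1}^{d_0}dt/t>1/(d_0-\tfrac12)$. Equivalently, the full series gives $(d_0-\tfrac12)\log\frac{d_0}{d_0-1}=1+\sum_{k\ge2}\frac{k-1}{2k(k+1)}d_0^{-k}$, so the true margin is $\delta=\frac1{12d_0^2}+O(d_0^{-3})$. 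With $y\le q^{-\delta}$ in place of your claimed bound, the rest of your argument goes through uniformly in $d$.
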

\begin{proof}
If $s>n_{d_0}$, then $N_{d_0}=0$. Otherwise, for every $d>d_0$,
the elementary counting bounds and the product formula give
\[
 \frac{N_{d_0}}{N_d}
 \le q\left(\frac{\alpha_{d_0}}{\alpha_d}\right)^s
 \le q(1-d_0^{-2})^s,
\]
where the last inequality uses $d\ge d_0+1$.
This proves the comparison when $q(1-d_0^{-2})^s\le1$.
In the remaining range, $s=O_{d_0}(\log q)$, so the Cauchy--Schwarz
lower bound above and $N_{d_0}/N\le q\alpha_{d_0}^s$ give
\[
 \frac{N_{d_0}}{N_d}
 \ll q\alpha_{d_0}^s+(1-d_0^{-2})^s.
\]
The midpoint bound for the integral of $1/t$ gives
$\log(d_0/(d_0-1))>1/(d_0-\tfrac12)$. Thus both terms are
$o_{d_0}(1)$, uniformly over the remaining $s$. For sufficiently
large $q$, this gives $N_{d_0}\le N_d$ for every $d>d_0$.
\end{proof}

\begin{remark*}
The comparison is natural, since $G_{d_0}^0$ has the largest cardinality
among these sets. If $d_0\mid d$, then $G_d^0\subseteq G_{d_0}^0$,
so the comparison holds for every $s$. In particular, no restriction on
$s$ is needed when $q-1$ is a prime power. For subgroups not related
by containment, comparison of means alone does not suffice.

The first part of the proof also gives a sufficient condition for
every $q>2$, allowing $d_0$ to vary:
\[
 s\ge\frac{\log q}{\log\bigl(d_0^2/(d_0^2-1)\bigr)}.
\]
\end{remark*}

For $s=O(\log q)$ and sufficiently large $q$, taking complements in
the Cauchy--Schwarz estimate bounds the proportion of covering shift sets:
\[
 1-\frac{N_d}{N}\ll\frac1{q\alpha_d^s}+\frac{s}{\sqrt q},
\]
uniformly in $d$. Higher binomial moments give a uniform formula
throughout the transition between covering and noncovering.

\begin{proposition}\label{prop:uniform-covering}
Fix $C>0$, and choose $A$ uniformly among the $s$-element subsets of
$\F_q$. Put $\lambda_d=q\alpha_d^s$.
For all sufficiently large prime powers $q$,
\begin{equation}\label{eq:uniform-covering-probability}
 \Pr(A+G_d^0=\F_q)=e^{-\lambda_d}+O_C(q^{-1/12}),
\end{equation}
uniformly for all integers $1\le s\le C\log q$ and all $d>1$ dividing
$q-1$. The lower bound on $q$ and the implied constant depend only on $C$.
\end{proposition}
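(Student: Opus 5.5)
We prove the formula by the method of binomial moments (Bonferroni inequalities applied to $U_d(A)$). For $r\ge1$, the double-counting identity \eqref{eq:uncovered-double-count} gives
\[
 \frac1N\sum_A\binom{U_d(A)}r
 =\frac1N\sum_{\#X=r}\binom{U_d(X)}s .
\]
By \eqref{eq:uncovered-centers}, every $r$-element set $X$ satisfies $U_d(X)=q\alpha_d^r+O(r2^{r-1}\sqrt q+r)$. Moreover $\binom ms/\binom qs$ equals $(m/q)^s$ up to a factor $1+O(s^2/\min(m,q))$, and a relative error $\delta$ in $m$ changes it by a factor $\exp(O(s\delta))$. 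Hence, for $r\le R$ with $R$ small compared with $\log q$,
\[
 M_r:=\mathbb E\binom{U_d(A)}r
 =\binom qr\alpha_d^{rs}\Bigl(1+O\bigl(s\,r2^{r}\alpha_d^{-r}q^{-1/2}+s^2/q\bigr)\Bigr).
\]
Since $\alpha_d\ge1/2$, the factor $(2/\alpha_d)^r$ is at most $4^r$. We also have $\binom qr\alpha_d^{rs}=\lambda_d^r/r!\cdot(1+O(r^2/q))$.

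Next we apply the Bonferroni inequalities: for every $R$,
\[
 \Bigl|\Pr(U_d(A)=0)-\sum_{r=0}^{R}(-1)^rM_r\Bigr|\le M_{R+1}.
\]
Substituting the moment estimates gives
\[
 \Pr(U_d(A)=0)=\sum_{r\le R}(-1)^r\frac{\lambda_d^r}{r!}+O\Bigl(\frac{\lambda_d^{R+1}}{(R+1)!}+\sum_{r\le R+1}\frac{\lambda_d^r}{r!}\,s\,4^r q^{-1/2}\Bigr).
\]
The truncated exponential series then differs from $e^{-\lambda_d}$ by $O(\lambda_d^{R+1}/(R+1)!)$.

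Three ranges of $\lambda_d$ remain.
\begin{itemize}
\item If $\lambda_d\ge L:=\frac1{12}\log q$, we use the complementary Cauchy--Schwarz bound stated before the proposition: $\Pr(\text{cover})\ll1/\lambda_d+s/\sqrt q$. This is not small enough by itself, since $1/\lambda_d$ is only $1/\log q$. Instead we take $R=1$ or $R=2$ in the Bonferroni bound, or use Chebyshev with the second moment computed above: $\Pr(U=0)\le\mathrm{Var}(U)/(\mathbb EU)^2\ll 1/\lambda_d+s/\sqrt q$. I expect this to be the main obstacle, because $\lambda_d$ between $\log q/12$ and roughly $q^{1/12}$ is not handled by either the second moment or short series. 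There I would use a larger $R$, of size $\asymp\log q/\log\log q$, tolerating error factors $4^R\lambda_d^R/R!$ against $q^{-1/2}$. Alternatively I would apply Janson-type reasoning. Simplest is to choose $R$ with $(4e\lambda_d)^R\le q^{1/3}$, and to note that $e^{-\lambda_d}\le q^{-1/12}$ once $\lambda_d\ge\frac1{12}\log q$. Monotonicity of $\Pr(U=0)$ in $\lambda$ via a coupling in $s$ (adding shifts only helps covering) would let me reduce to the point where $\lambda_d\approx\frac{1}{12}\log q$, which lies in the bounded-range case. Concretely, a superset of $A$ obtained by adding shifts covers whenever $A$ does, so $\Pr$ is nondecreasing in $s$; thus $\Pr(\text{cover at }s)\le\Pr(\text{cover at }s')$ for $s'\ge s$. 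I would pick $s'$ with $\lambda_d(s')\approx\frac1{12}\log q$ to bound the probability from above, checking that $\alpha_d^{-1}\le2$ keeps the $\lambda$ jump per step bounded.
\item If $\lambda_d\le L$, we take $R=\lfloor c\log q/\log\log q\rfloor$. Then $\lambda_d^{R+1}/(R+1)!\le(eL/R)^{R+1}\le q^{-1/12}$. The moment error is at most $s\,e^{4\lambda_d}q^{-1/2}\le C\log q\,q^{1/3-1/2}$, which is again $O(q^{-1/12})$. In applying \eqref{eq:uncovered-centers} we need $r2^r\sqrt q\ll q\alpha_d^r$, i.e.\ $4^r\ll\sqrt q$, which holds for this $R$. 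We also need the relative $O(s\delta)$ perturbation with $\delta\asymp 4^R q^{-1/2}$ to be small, which is fine.
\item If $\lambda_d$ is very small, the estimate is trivial, since $\Pr(\text{fail})\le\lambda_d$ by the exact bound $N_d/N\le q\alpha_d^s$ and $1-e^{-\lambda_d}\le\lambda_d$.
\end{itemize}

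Uniformity in $d$ comes from the fact that every estimate used depends on $d$ only through $\alpha_d\in[1/2,1)$.
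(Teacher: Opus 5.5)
Your moment estimates and the Bonferroni framework match the paper's, but each of the two ranges of $\lambda_d$ has a genuine gap.

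\textbf{The range $\lambda_d\le L$.} The truncation order $R\asymp\log q/\log\log q$ is too short. Here $eL/R\asymp\log\log q\to\infty$, so $(eL/R)^{R+1}$ tends to infinity instead of being at most $q^{-1/12}$, and $L^{R+1}/(R+1)!$ is huge when $\lambda_d=L$. The truncation order has to be a constant multiple of $\log q$ exceeding $eL$. The paper takes $K=\lfloor(\log q)/3\rfloor$, for which $(eL/K)^K=q^{-(\log4-1)/3+o(1)}$. This is admissible because the moment estimates only need $sK4^K=o(\sqrt q)$, and $(\log4)/3<1/2$. So your restriction to $R$ small compared with $\log q$ is unnecessary, and in this range it is fatal.

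\textbf{The range $\lambda_d>L$.} You flag this as the main obstacle, and none of the proposed routes reaches $O(q^{-1/12})$.

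Longer truncations fail. The alternating partial sums $\sum_{r\le R}(-\lambda_d)^r/r!$ have size about $(e\lambda_d/R)^R$ unless $R\gtrsim e\lambda_d$, while your condition $(4e\lambda_d)^R\le q^{1/3}$ forces $R=O(\log q/\log\log q)$.

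The coupling in $s$ is valid, but $\lambda_d$ changes by the factor $\alpha_d$ at each step. So the best admissible $s'$ only guarantees $\lambda_d(s')>\alpha_dL$. For $d=2$ this gives $\Pr(A+G_d^0=\F_q)\le e^{-L/2}+O(q^{-1/12})$, which is only $O(q^{-1/24})$.

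Extending the small-$\lambda$ argument to $\lambda_d\le L/\alpha_d=(\log q)/6$ would not help. With the error term $r4^r/\sqrt q$, the moments are controlled only for $r<(\log q)/\log16$. The truncation error at $\lambda_d=(\log q)/6$ is small only for $R>e\lambda_d=(e/6)\log q$, and these two ranges do not overlap. You would also need $s'=O_C(\log q)$. That holds only because, once Chebyshev has disposed of $\lambda_d\ge q^{1/12}$, the remaining range forces $d=O_C(1)$.

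\textbf{The missing idea} is to smooth continuously rather than in $s$. With $t=L/\lambda_d\in(0,1)$, one has $\Pr(U_d(A)=0)\le\mathbb E(1-t)^{U_d(A)}$. The bound $|(1-t)^n-\sum_{r<K}(-t)^r\binom nr|\le t^K\binom nK$ then replaces each moment by $t^rM_r=\frac{L^r}{r!}(1+O_C(sr4^r/\sqrt q))$. The small-$\lambda$ computation applies verbatim with parameter exactly $L$. This gives $e^{-L}+O_C(q^{-1/12})=O_C(q^{-1/12})$ uniformly in $d$, with no loss from the discreteness of $s$ and no need to control $s'$ or $d$.
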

\begin{proof}
We first estimate the binomial moments up to order
\[
 K=\lfloor(\log q)/3\rfloor.
\]

For each $r$-element set $X$, \eqref{eq:uncovered-centers} and
$\alpha_d\ge1/2$ give
\[
 U_d(X)=q\alpha_d^r\left(1+O\left(
 \frac{r4^r}{\sqrt q}\right)\right).
\]
The relative error multiplied by $s$ is $o_C(1)$ uniformly for
$r\le K$, since $(\log4)/3<1/2$. In particular,
$U_d(X)\ge\tfrac12q2^{-r}\gg s$ for sufficiently large $q$,
uniformly in the stated ranges. The product formula, including the
error from sampling without replacement, gives
\[
 \frac{\binom{U_d(X)}s}{\binom qs}
 =\prod_{j=0}^{s-1}\frac{U_d(X)-j}{q-j}
 =\alpha_d^{rs}\left(1+O_C\left(
 \frac{sr4^r}{\sqrt q}+\frac{s^22^r}{q}\right)\right).
\]
Since $s\le C\log q$, we have, uniformly for $1\le r\le K$,
\[
 \binom qr=\frac{q^r}{r!}\bigl(1+O(r^2/q)\bigr),\qquad
 \frac{s^22^r+r^2}{q}\ll_C\frac{sr4^r}{\sqrt q}.
\]
Thus dividing \eqref{eq:uncovered-double-count} by $\binom qs$ and
absorbing these smaller errors gives
\begin{equation}\label{eq:uncovered-binomial-moments}
 \mathbb E\binom{U_d(A)}r
 =\frac{\lambda_d^r}{r!}\left(1+O_C\left(
 \frac{sr4^r}{\sqrt q}\right)\right),\qquad 1\le r\le K.
\end{equation}

We next estimate the probability of no uncovered points. Put
\[
 L=\frac{\log q}{12},
\]
and first suppose that $\lambda_d\le L$.
For every integer $n\ge0$ and $0\le t\le1$, Taylor's theorem gives
\[
 \left|(1-t)^n-\sum_{r=0}^{K-1}(-t)^r\binom nr\right|
 \le t^K\binom nK.
\]
At $t=1$, combine this bound with the Taylor remainder for
$e^{-\lambda_d}$ and \eqref{eq:uncovered-binomial-moments}.
Extending the resulting positive error sum gives
\begin{align*}
 \left|\Pr(U_d(A)=0)-e^{-\lambda_d}\right|
 &\ll_C\frac{s}{\sqrt q}\sum_{r\ge1}\frac{r(4\lambda_d)^r}{r!}
       +\frac{\lambda_d^K}{K!}\\
 &\ll_C\frac{s\lambda_d e^{4\lambda_d}}{\sqrt q}
       +\left(\frac{\mathrm eL}{K}\right)^K
 =O_C(q^{-1/12}).
\end{align*}
Here the first term is $O_C((\log q)^2q^{-1/6})$, and the
last is $q^{-(\log4-1)/3+o(1)}=o(q^{-1/12})$.

If $\lambda_d>L$, apply the same Taylor bound with $t=L/\lambda_d$.
Each term of order $r$ is multiplied by $(L/\lambda_d)^r$, so the
preceding calculation now has parameter $L$. Hence
\[
 \Pr(U_d(A)=0)
 \le\mathbb E\left(1-\frac L{\lambda_d}\right)^{U_d(A)}
 =e^{-L}+O_C(q^{-1/12})=O_C(q^{-1/12}).
\]
Together with $e^{-\lambda_d}\le e^{-L}=q^{-1/12}$, this proves
\eqref{eq:uniform-covering-probability} in the remaining range.
\end{proof}

\begin{remark*}
For fixed $d>1$, put
\[
 c_d=\frac1{\log(d/(d-1))}.
\]
Since
$\lambda_d=\alpha_d^{\,s-c_d\log q}$,
Proposition~\ref{prop:uniform-covering} gives, throughout the critical
window $s=c_d\log q+O(1)$,
\[
 \Pr(A+G_d^0=\F_q)
 =\exp\left(-\alpha_d^{\,s-c_d\log q}\right)+O_d(q^{-1/12}).
\]
The formula retains the bounded displacement caused by integer rounding: choosing
$s=\lceil c_d\log q\rceil$ need not give a single limiting probability.
At a fixed positive distance above or below $c_d$ in $s/\log q$,
the covering probability tends to one or zero, respectively.
Since $d-1<c_d<d$, an integer $k$ lies strictly between $c_k$ and
$c_{k+1}$, which explains the following cutoff.
\end{remark*}

\begin{corollary}\label{cor:counting-threshold}
Fix an integer $k\ge2$. For all sufficiently large prime powers $q$
and integers $1\le s\le\lceil k\log q\rceil$, all but an
$O_k(q^{-\delta})$ proportion of the $s$-element subsets $A\subseteq\F_q$ satisfy
\[
 A+G_d^0=\F_q\quad\Longrightarrow\quad d\le k
 \qquad\text{for every }d>1\text{ dividing }q-1.
\]
Here $\delta>0$ depends only on $k$.
For $s=\lceil k\log q\rceil$, the implication can be replaced by an equivalence.
More precisely, one may take
\[
 \delta=\frac1{2\max\{6,k\}}.
\]
In particular, for odd $q$ and $s=\lceil2\log q\rceil$, the only covering index is $2$
apart from an $O(q^{-1/12})$ proportion of the shift sets.
\end{corollary}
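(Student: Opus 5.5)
The plan is to split the divisors $d>1$ of $q-1$ into three ranges and bound, in each range, the proportion of $s$-element sets that cover with $G_d^0$. The bad event is that $A$ covers for some $d>k$. Write $\lambda_d=q\alpha_d^s$.

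\emph{Large indices.} The uniform index restriction handles these. By Corollary~\ref{cor:index-growth}, with $s\le\lceil k\log q\rceil$, every covering has $d\le D_k$ for a constant $D_k$ depending only on $k$. Indeed, $s/\log q\le k+1$ for large $q$, so $d\ll (k+1)\log(k+3)$. Hence no $s$-set covers with an index $d>D_k$, and only the indices $k<d\le D_k$ remain. There are at most $D_k$ of them.

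\emph{Middle indices, $k<d\le D_k$.} Here I use Proposition~\ref{prop:uniform-covering} with $C=k+1$. For each such $d$,
\[
 \Pr(A+G_d^0=\F_q)\le e^{-\lambda_d}+O_k(q^{-1/12}).
\]
Since $d\ge k+1$, we have $c_d\ge c_{k+1}>k$, so
\[
 \lambda_d=q\alpha_d^{s}\ge q\alpha_{k+1}^{k\log q+1}=\alpha_{k+1}\,q^{1-k\log((k+1)/k)}.
\]
The exponent is $\eta_k:=1-k\log(1+1/k)>0$, and it is of order $1/(2k)$, because $k\log(1+1/k)=1-1/(2k)+O(k^{-2})$. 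Thus $e^{-\lambda_d}$ is super-polynomially small. Smaller $s$ only increases $\lambda_d$, so this holds for all $s\le\lceil k\log q\rceil$. A union bound over the at most $D_k$ indices gives an exceptional proportion $O_k(q^{-1/12})$.

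\emph{Equivalence at $s=\lceil k\log q\rceil$.} Here I must also show that almost every $A$ covers for every $d\mid q-1$ with $2\le d\le k$. The first-moment bound $N_d/N\le q\alpha_d^s$ already does this. Since $c_d<d\le k$,
\[
 q\alpha_d^s\le q^{1-k\log(d/(d-1))}\le q^{1-k\log(k/(k-1))}.
\]
The exponent is negative: $k\log(k/(k-1))>1$, with $1-k\log(k/(k-1))\le -1/(2k)$ because $\log(1+x)\ge x-x^2/2$. The number of such $d$ is at most $k$, so the failure proportion is $O_k(q^{-1/(2k)})$. Combining both ranges, the exceptional proportion is $O_k(q^{-\min(1/12,1/(2k))})$, which is $\delta=1/(2\max\{6,k\})$. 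The case $k=2$ gives $q^{-1/12}$, which is the stated special case.

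The main obstacle is keeping the exponents precise enough to obtain exactly $\delta=1/(2\max\{6,k\})$. The middle range is not where $\delta$ comes from, since $\lambda_d\to\infty$ polynomially there. The elementary inequalities $\log(1+1/(k-1))\ge 1/k+1/(2k^2)$ are where I must check that the $-1/(2k)$ exponent survives with the constant absorbed, and that the Proposition~\ref{prop:uniform-covering} error $q^{-1/12}$ dominates otherwise.
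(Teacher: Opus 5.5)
Your proof is correct. For the indices $d>k$ it follows the paper's argument. Corollary~\ref{cor:index-growth} leaves only indices $d\le D_k$. For $k<d\le D_k$, the bound $\lambda_d\ge\frac{k}{k+1}q^{1-k\log(1+1/k)}$ together with Proposition~\ref{prop:uniform-covering} at $C=k+1$ gives an $O_k(q^{-1/12})$ exceptional proportion.

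For the equivalence at $s=\lceil k\log q\rceil$ your route is slightly more direct. The paper applies Proposition~\ref{prop:uniform-covering} once more and uses $1-e^{-\lambda_d}\le\lambda_d$ to get $\Pr(U_d(A)>0)\le\lambda_d+O_k(q^{-1/12})$. Your exact first-moment bound $N_d/N\le q\alpha_d^s=\lambda_d$ has no error term and holds for every $q$. The resulting $\delta=\min\{1/12,1/(2k)\}$ is the same either way, since the $q^{-1/12}$ coming from the indices $d>k$ remains.

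One justification needs repair. The inequality $\log(1+x)\ge x-x^2/2$ at $x=1/(k-1)$ only gives
\[
k\log\tfrac{k}{k-1}-1\ge\tfrac{k-2}{2(k-1)^2},
\]
which is smaller than $\tfrac1{2k}$ for every $k$ and equals $0$ at $k=2$. Use instead
\[
\log\tfrac{k}{k-1}=-\log(1-1/k)=\sum_{n\ge1}\tfrac1{nk^n}\ge\tfrac1k+\tfrac1{2k^2},
\]
which is the inequality you state at the end. Alternatively, use the paper's identity $k\log\frac{k}{k-1}-1=\int_0^1\frac{t}{k-t}\,dt>\frac1{2k}$.
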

\begin{proof}
Since $s\le\lceil k\log q\rceil$, Corollary~\ref{cor:index-growth}
leaves only $O_k(1)$ candidate indices, in a bounded range depending
only on $k$ and independent of $A$.
For each such index $d>k$,
\[
 \lambda_d=q\alpha_d^s
 \ge\frac{k}{k+1}q^{\,1-k\log(1+1/k)}.
\]
The exponent is positive because $k\log(1+1/k)<1$.
Proposition~\ref{prop:uniform-covering}, applied with $C=k+1$, therefore gives
\[
 \Pr(U_d(A)=0)=O_k(q^{-1/12}).
\]
Summing over the bounded range of candidate indices proves the
first assertion.

Now let $s=\lceil k\log q\rceil$. For $2\le d\le k$ with $d\mid q-1$,
we have
\[
 \lambda_d\le q^{\,1-k\log(k/(k-1))}.
\]
Since
\[
 k\log\frac{k}{k-1}-1
 =\int_0^1\frac{t}{k-t}\,dt>\frac1{2k},
\]
the same proposition and $1-e^{-\lambda_d}\le\lambda_d$ give
\[
 \Pr(U_d(A)>0)
 \le\lambda_d+O_k(q^{-1/12})
 =O_k(q^{-\delta}).
\]
Another finite union bound proves the equivalence.
For odd $q$, the index $2$ divides $q-1$, so taking $k=2$ gives
the final assertion.
\end{proof}

\section{Subfield shift sets and the projection bound}\label{sec:subfields}
We first describe decomposition over additive cosets and characterize
equality in the counting bound. We then take a subfield as the shift
set and seek coverings by smaller multiplicative subgroups than in
Example~\ref{prop:quadratic-covering}.

\begin{proposition}\label{prop:subfield-reduction}
Let $V$ be an additive subgroup of $\F_q$ containing $G_d^0$. Then
\[
 \tau(q,d)=\frac{q}{\#V}\min\{\#A:A\subseteq V,\ A+G_d^0=V\}.
\]
In particular, if $G_d$ is contained in the multiplicative group of a proper
subfield, every covering \eqref{eq:covering} satisfies $q\le s^2$.
\end{proposition}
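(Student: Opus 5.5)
Since $V$ is an additive subgroup containing $G_d^0$, the field $\F_q$ is the disjoint union of the $q/\#V$ cosets $c+V$. For any $A\subseteq\F_q$, the set $A+G_d^0$ meets a coset $c+V$ exactly in $(A\cap(c+V))+G_d^0$, because $G_d^0\subseteq V$. Hence $A+G_d^0=\F_q$ holds if and only if, for every coset representative $c$, the translated piece $A_c=(A\cap(c+V))-c\subseteq V$ satisfies $A_c+G_d^0=V$. (Translating by $c$ preserves this condition, since $(c+V)-c=V$.) Put $m=\min\{\#B:B\subseteq V,\ B+G_d^0=V\}$. Each $A_c$ must have at least $m$ elements, and the $A_c$ come from disjoint pieces of $A$, so $\#A\ge(q/\#V)m$. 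Conversely, choose an optimal $B\subseteq V$ and take $A=\bigcup_c(c+B)$ over a complete set of coset representatives. This covers $\F_q$ and has exactly $(q/\#V)m$ elements. Together these give the formula for $\tau(q,d)$. The only point needing care is that the decomposition is exact, i.e.\ that no element of $G_d^0$ moves points between cosets, and this is precisely the hypothesis $G_d^0\subseteq V$.

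For the second assertion, suppose $G_d\subseteq\F_r^\times$ for a proper subfield $\F_r\subsetneq\F_q$, and take $V=\F_r$. This is an additive subgroup containing $G_d^0$, so every covering must meet each of the $q/r$ cosets of $\F_r$. This gives $s\ge q/r$. Since $r\le\sqrt q$ for a proper subfield (the degree $[\F_q:\F_r]$ is at least $2$), we get $q/r\ge\sqrt q$, so $s\ge\sqrt q$ and hence $q\le s^2$.

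This lower bound uses only the trivial fact that each coset needs at least one shift, so $m\ge1$. I expect no real obstacle here; the main thing to state carefully is the coset-by-coset equivalence in the first step, after which both parts are counting.
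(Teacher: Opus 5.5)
Your proof is correct and follows the paper's argument. The paper likewise observes that each translate $a+G_d^0$ lies in a single coset of $V$, so every coset needs the same minimum number of translates. For the subfield case it uses that the $q/r\ge\sqrt q$ cosets each need at least one translate. You additionally write out the coset-by-coset equivalence and the matching construction, which the paper leaves implicit.
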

\begin{proof}
Each translate lies in a single coset of $V$, and each coset requires
the same minimum number of translates. This proves the formula.
A proper subfield has at most $\sqrt q$ elements, so its additive
cosets require at least $\sqrt q$ translates.
\end{proof}

\begin{proposition}\label{prop:counting-equality}
For a covering \eqref{eq:covering}, equality
\[
 q=s\#G_d^0
\]
holds if and only if $G_d^0$ is a subfield of $\F_q$ and $A$ contains exactly
one representative of each of its additive cosets.
\end{proposition}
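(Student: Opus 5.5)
The "if" direction is a direct check, and the "only if" direction turns equality in the counting bound into a tiling, which forces additive closure. For "if", suppose $G_d^0=K$ is a subfield and $A$ is a complete set of coset representatives of $(K,+)$ in $\F_q$. Then $A+K=\F_q$ and the cosets are disjoint. Hence $q=s\#K=s\#G_d^0$.

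For "only if", assume $A+G_d^0=\F_q$ and $q=s\#G_d^0$. Counting $\sum_x r^0_A(x)=s\#G_d^0=q$ with every $r^0_A(x)\ge1$ forces $r^0_A(x)=1$ for all $x$. So the translates $a+G_d^0$ ($a\in A$) partition $\F_q$, and each $x$ has a unique representation $x=a+g$.

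The key step is showing $H:=G_d^0$ is closed under addition. I would exploit the multiplicative symmetry of the tiling.
\begin{itemize}
\item For $c\in G_d$, the set $cA$ also tiles with $H$, since $cH=H$. That alone does not give closure.
\item A better route is the standard tiling-by-differences criterion: $A+H$ is a tiling iff $(A-A)\cap(H-H)=\{0\}$.
\item Also $\#A\cdot\#H=q$, with $\#H=(q-1)/d+1$. So $\#H$ divides $q=p^n$, giving $\#H=p^m$ and $(q-1)/d=p^m-1$.
\item Then $G_d$ is the unique subgroup of $\F_q^\times$ of order $p^m-1$. I would check that $m\mid n$: the conditions $p^m-1\mid p^n-1$ and $p^m\mid p^n$ hold, and the first implies $m\mid n$.
\item Therefore $G_d=\F_{p^m}^\times$, so $H=\F_{p^m}$ is a subfield.
\end{itemize}
This is clean. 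The only delicate point is that $\#H=p^m$ follows from $\#H\mid q$, which holds because $s\#H=q$ exactly.

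Finally, since $H$ is now a subgroup, the translates $a+H$ are cosets. The partition means they are distinct and exhaust $\F_q$. So $A$ contains exactly one representative of each coset, which completes both directions. I expect no real obstacle beyond the divisibility step $p^m-1\mid p^n-1\Rightarrow m\mid n$. That follows from $\gcd(p^m-1,p^n-1)=p^{\gcd(m,n)}-1$.
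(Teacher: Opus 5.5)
Your proof is correct and follows essentially the paper's argument. Equality makes $\#G_d^0$ a divisor of $q$, hence a power $p^m$ with $m\ge1$; the identity $\gcd(p^m-1,p^n-1)=p^{\gcd(m,n)}-1$ gives $m\mid n$, uniqueness of the subgroup of order $p^m-1$ gives $G_d^0=\F_{p^m}$, and the tiling then forces one representative per coset. Your first two bullets, on the multiplicative symmetry of $cA$ and on the difference criterion, are never used, so you can delete them.
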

\begin{proof}
Write $q=p^n$. Equality gives $\#G_d^0=p^a$ for some $a\ge1$,
and $p^a-1$ divides $p^n-1$. The identity
\[
 \gcd(p^a-1,p^n-1)=p^{\gcd(a,n)}-1
\]
therefore gives $a\mid n$. By uniqueness of the multiplicative
subgroup of order $p^a-1$, we have $G_d^0=\F_{p^a}$.
Its translates are the additive cosets of this subfield, and equality
forces $A$ to contain exactly one representative of each.
The converse is immediate.
\end{proof}

For $d>s$, the elementary counting bound gives
\[
 q\le s\left(\frac{q-1}{d}+1\right)
 \quad\Longrightarrow\quad q\le\frac{s(d-1)}{d-s}\le s^2,
\]
where the last inequality uses $d\ge s+1$.
Thus $q>s^2$ forces $d\le s$.
If $q=s^2$ and $d>s$, equality forces $d=s+1$.
Proposition~\ref{prop:counting-equality} then gives $G_d^0=\F_s$,
with $A$ a complete set of additive coset representatives in $\F_{s^2}$.
Conversely, $\tau(s^2,s+1)=s$ for every prime power $s$.

We now use the subfield $\F_q$ as the shift set in $\F_{q^2}$,
where $q$ is a prime power. Throughout the rest of this section,
\[
 G_d=(\F_{q^2}^\times)^d.
\]
The $q-1$ nonzero additive cosets of $\F_q$ must all be met by
$G_d\setminus\F_q$. Counting these elements gives the following bound.
For character-sum estimates on power residues in individual translates
of subfields, see \cite[Theorem~1.3]{martinyip}.

\begin{proposition}\label{prop:quadratic-bound}
Suppose that $\F_q+G_d^0=\F_{q^2}$, where $d>1$ divides $q^2-1$.
Then
\begin{equation}\label{eq:quadratic-bound}
 d+\gcd(d,q+1)\le q+1.
\end{equation}
If equality holds, every $x\in\F_{q^2}\setminus\F_q$ has a unique
representation $x=a+h$ with $a\in\F_q$ and $h\in G_d$.
\end{proposition}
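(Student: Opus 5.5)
The plan is to project onto the additive quotient $\F_{q^2}/\F_q$, which has $q$ cosets. Every $x\notin\F_q$ lies in a nonzero coset $x+\F_q$. Since the shift set is $\F_q$, the point $x$ is covered exactly when $x-a\in G_d^0$ for some $a\in\F_q$. This holds if and only if the coset $x+\F_q$ meets $G_d$, because $0$ lies in the zero coset. Hence the covering condition is equivalent to the statement that $G_d\setminus\F_q$ meets each of the $q-1$ nonzero cosets. So the first step is the inequality
\[
 q-1\le\#(G_d\setminus\F_q)=\frac{q^2-1}{d}-\#(G_d\cap\F_q^\times).
\]

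The next step is to compute $\#(G_d\cap\F_q^\times)$ exactly. Both groups are subgroups of the cyclic group $\F_{q^2}^\times$ of order $q^2-1$. Their orders are $(q^2-1)/d$ and $q-1$, so their intersection is the unique subgroup of order $\gcd((q^2-1)/d,\,q-1)$. Writing $q^2-1=(q-1)(q+1)$, I will check that
\[
 \gcd\Bigl(\frac{(q-1)(q+1)}{d},\,q-1\Bigr)=\frac{(q-1)\gcd(d,q+1)}{d}.
\]
One way is to note that this intersection is the subgroup of $\F_q^\times$ of $d$th powers of norm-compatible elements. A cleaner route is to take a generator $g$, so that $G_d=\langle g^d\rangle$ and $\F_q^\times=\langle g^{q+1}\rangle$. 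Then $\langle g^d\rangle\cap\langle g^{q+1}\rangle=\langle g^{\operatorname{lcm}(d,q+1)}\rangle$, whose order is $(q^2-1)/\operatorname{lcm}(d,q+1)=(q^2-1)\gcd(d,q+1)/(d(q+1))$, which is the same quantity. This index computation is the only step needing care, and the lcm formulation avoids any gcd manipulation.

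Substituting gives
\[
 q-1\le\frac{(q-1)(q+1)}{d}-\frac{(q-1)\gcd(d,q+1)}{d}.
\]
Dividing by $(q-1)/d$ yields $d\le q+1-\gcd(d,q+1)$, which is \eqref{eq:quadratic-bound}.

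For the equality case, equality means that $\#(G_d\setminus\F_q)=q-1$ while every one of the $q-1$ nonzero cosets is met. Hence each nonzero coset contains exactly one element of $G_d$. For $x\notin\F_q$, a representation $x=a+h$ with $a\in\F_q$ and $h\in G_d$ forces $h\in x+\F_q$, so $h$ is that unique element and $a=x-h$ is determined. Therefore the representation is unique.
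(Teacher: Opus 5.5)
Your proof is correct and is essentially the paper's argument: working with the additive cosets of $\F_q$ is the same as working with the fibers of the $\F_q$-linear projection $\pi$ with kernel $\F_q$, and your lcm computation of $\#(G_d\cap\F_q^\times)$ is equivalent to the paper's gcd formula obtained by cyclicity. The equality case is also handled the same way: each nonzero coset contains exactly one element of $G_d$, which gives both existence and uniqueness of $h$, and then $a=x-h$.
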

\begin{proof}
Choose a nonzero $\F_q$-linear map $\pi:\F_{q^2}\to\F_q$ with kernel $\F_q$.
Then
\begin{equation}\label{eq:projection}
 \F_q+G_d^0=\F_{q^2}\quad\Longleftrightarrow\quad\pi(G_d^0)=\F_q.
\end{equation}
By cyclicity,
\[
 \#(G_d\cap\F_q^\times)
 =\gcd\!\left(\frac{q^2-1}{d},q-1\right)
 =\frac{q-1}{d}\gcd(d,q+1).
\]
Every nonzero value of $\pi$ must be attained on $G_d\setminus\F_q$, so
\[
 q-1\le\#(G_d\setminus\F_q)
 =\frac{q-1}{d}\bigl(q+1-\gcd(d,q+1)\bigr).
\]
This proves \eqref{eq:quadratic-bound}.
If equality holds, $\pi$ maps $G_d\setminus\F_q$ bijectively onto
$\F_q^\times$. For $x\notin\F_q$, the unique $h\in G_d$ with
$\pi(h)=\pi(x)$ then gives $a=x-h\in\F_q$.
\end{proof}

For odd $q$, Example~\ref{prop:quadratic-covering} gives $d=(q+1)/2$.
The construction below uses cube roots of unity to give an exact
covering criterion at a larger index. The subgroup has smaller
cardinality and is not contained in the one from that example.

\begin{example}\label{prop:cubic-family}
Let $q$ be an odd prime power with $q\equiv2\pmod3$, and put $d=2(q+1)/3$.
Choose a primitive cube root of unity $\zeta\in\F_{q^2}$.
Since $3\nmid q-1$, we have $\zeta\notin\F_q$.
Put $T=(\F_q^\times)^2$. The group $T\langle\zeta\rangle$ has order
$3(q-1)/2$ and hence equals $G_d$.
In the basis $1,\zeta$, use the projection $\pi(x+y\zeta)=y$.
The identity $\zeta^2=-1-\zeta$ gives
\[
 \pi(G_d^0)=\{0\}\cup T\cup(-T).
\]
The image is $\F_q$ exactly when $-1$ is a nonsquare, so
\[
 \F_q+G_d^0=\F_{q^2}
 \quad\Longleftrightarrow\quad q\equiv3\pmod4.
\]
Otherwise the image has $(q+1)/2$ elements. Since each value of $\pi$
has $q$ preimages, the complement has $q(q-1)/2$ elements.

Taking $q=11^{2t+1}$, $t=0,1,\ldots$, gives an infinite covering family with
$d/q\to2/3$. For example, $\F_{11}+G_8^0=\F_{121}$,
whereas $\F_5+G_4^0$ misses $10$ elements of $\F_{25}$.

When the covering holds, $\gcd(d,q+1)=(q+1)/3$ gives equality in
\eqref{eq:quadratic-bound}. In particular,
\[
 \#\{(a,h)\in\F_q\times G_d^0:x=a+h\}
 =\begin{cases}
   1,&x\in\F_{q^2}\setminus\F_q,\\
   (q+1)/2,&x\in\F_q.
  \end{cases}
\]
Thus all overlaps lie in $\F_q$. The equality case in
Proposition~\ref{prop:counting-equality} instead has unique representations
throughout the field.
\end{example}

\end{document}